\documentclass[oneside,11pt,doubleblind]{article}

\usepackage{geometry}
 \geometry{
 letterpaper,
 left=25mm,
 right = 25mm,
 top=20mm,
 bottom=25mm,
 }

\usepackage{censor}

\usepackage[displaymath, mathlines]{lineno}

 \usepackage[onehalfspacing]{setspace}

\setlength{\parskip}{0.7em}
\setlength{\parindent}{0em}
\usepackage{graphicx}

\usepackage[english]{babel}	
\usepackage[utf8]{inputenc}
\usepackage{mathtools}
\usepackage{amssymb}
\usepackage{amsthm}

\usepackage{amsmath}
\usepackage{amsfonts}
\usepackage{dsfont}
\usepackage{tikz-cd}
\usepackage{xcolor}
\usepackage{graphicx}
\usepackage{hyperref}
\usepackage{enumitem}
\usepackage{pgfplots}
\usepackage{booktabs}

\usepackage{authblk}
\setcounter{Maxaffil}{5}

\newtheorem{theorem}{Theorem}[section]
\newtheorem{lemma}[theorem]{Lemma}

\newtheorem{proposition}[theorem]{Proposition}				
\newtheorem{definition}[theorem]{Definition}
\newtheorem{example}[theorem]{Example}		
\newtheorem{remark}[theorem]{Remark}
\newtheorem{assumption}[theorem]{Assumption}
\newtheorem{algorithm}[theorem]{Algorithm}

\newcommand{\rot}{}
\newcommand{\inner}[1]{\langle#1\rangle}       
\newcommand{\abs}[1]{\lvert #1 \rvert}

\newcommand{\C}{\mathds C}

\newcommand{\N}{\mathds N}

\newcommand{\R}{\mathds R}

\newcommand{\X}{\mathds X}
\newcommand{\Y}{\mathds Y}
\newcommand{\Hr}{\mathds H}
\newcommand{\Z}{\mathds Z}

\DeclareMathOperator{\sign}{sign}

\DeclareMathOperator{\ran}{ran}
\DeclareMathOperator{\dom}{dom}
\DeclareMathOperator{\id}{Id}

\DeclareMathOperator{\supp}{supp}

\newcommand{\Fo}{\mathcal{F}}
\newcommand{\Ko}{\mathcal{K}}
\newcommand{\Uo}{\mathcal{U}}
\newcommand{\Vo}{\mathcal{V}}
\newcommand{\Wo}{\mathcal{W}}

\newcommand{\Io}{\mathcal{I}}

\newcommand*\diff{\mathop{}\!\mathrm{d}}

\DeclarePairedDelimiter\kl{(}{)}

\DeclarePairedDelimiter\norm{\lVert}{\rVert}

\newcommand{\la}{\lambda}
\newcommand{\La}{\Lambda}
\newcommand{\om}{\omega}

\newcommand{\al}{\alpha}

\newcommand{\qsv}{\kappa}

\newcommand{\tik}{\mathrm{T}}
\newcommand{\regfun}{\mathrm{R}}
\newcommand{\data}{g}
\newcommand{\signal}{f}

\newcommand{\reg}{\mathcal{R}}
\newcommand{\wvd}{\mathcal{D}}
\newcommand{\legendre}{\mathcal{L}}

\newcommand{\filt}{\Phi}
\newcommand{\soft}{\operatorname{soft}}

\numberwithin{equation}{section}

\allowdisplaybreaks

\title{Translation invariant diagonal frame decomposition of inverse problems and their regularization}

\date{February 22, 2023}

\author{Simon G\"oppel}
\affil{Department of Mathematics, University of Innsbruck\authorcr
 Technikerstrasse 13, 6020 Innsbruck, Austria
 \authorcr E-mail:  \texttt{simon.goeppel@uibk.ac.at}}

\author{J\"urgen Frikel}
\affil{Department of Computer Science and Mathematics\authorcr
Universit\"atsstra{\ss}e 31, D-93053 Regensburg, Germany
\authorcr E-mail: \texttt{juergen.frikel@oth-regensburg.de}
}

\author{Markus Haltmeier}
\affil{Department of Mathematics, University of Innsbruck\authorcr
Technikerstrasse 13, 6020 Innsbruck, Austria
 \authorcr E-mail:  \texttt{markus.haltmeier@uibk.ac.at}
 }

\begin{document}
\maketitle

\begin{abstract}
Solving inverse problems is central to a variety of important applications, such as biomedical image reconstruction and non-destructive testing.  These problems are characterized by the sensitivity of direct solution methods with respect to data perturbations. To stabilize the reconstruction process, regularization methods have to be employed. Well-known regularization methods are based on frame expansions, such as the wavelet-vaguelette (WVD) decomposition, which are well adapted to the underlying signal class and the forward model and furthermore allow efficient implementation. However, it is well known that the lack of translational invariance of wavelets and related systems leads to specific artifacts in the reconstruction.  To overcome this problem, in this paper we introduce and analyze the translation invariant diagonal frame decomposition (TI-DFD) of linear operators as a novel concept generalizing the SVD. We characterize ill-posedness via the TI-DFD and prove that a TI-DFD combined with a regularizing filter leads to a convergent regularization method with optimal convergence rates. As illustrative example, we construct a wavelet-based TI-DFD for one-dimensional integration, where we also investigate our approach numerically. The results indicate that filtered TI-DFDs eliminate the typical wavelet artifacts when using standard wavelets and provide a fast, accurate, and stable solution scheme for inverse problems.

\medskip\noindent
\textbf{Keywords.} Inverse problems, regularization, convergence rates, translation invariance, frames, wavelets, vaguelettes, operator decomposition
\end{abstract}

\section{Introduction}

In this paper, we study with the stable solution of linear inverse problems. Such problems aim at recovering an unknown image or function $\signal_\star \in L^2(\R^d)$ from data
\begin{linenomath*}
\begin{equation}\label{eq:ip}
	\data^\delta = \Ko \signal_\star + \eta \,.
\end{equation}
\end{linenomath*}
Here  $\Ko \colon \dom (\Ko) \subseteq L^2(\R^d) \to \Y$ is a closed not necessarily bounded, linear operator between $L^2(\R^d)$ and another Hilbert space $\Y$ and $\eta$ denotes the unknown data distortion, which in our analysis is assumed to satisfy  $\norm{\eta} \leq \delta$ for the noise level $\delta >0$.  Inverse problems of the form \eqref{eq:ip} are often ill-posed, meaning that the solution is either not well-defined  or unstable with respect to data perturbations. In order to stabilize the solution  process one has to apply regularization methods \cite{engl1996regularization,markusvariation}. The basic idea of regularization is to incorporate prior information and to relax the exact solution concept to make the inversion process stable.

A common and successful approach for solving inverse problems is variational regularization. In this context, prior information is incorporated in the form  of a regularizer $\regfun \colon L^2(\R^d) \to [0, \infty]$ acting as penalty and  regularized solutions are constructed as minimizers of the generalized Tikhonov functional
\begin{linenomath*}
\begin{equation} \label{eq:tikhonov}
 \tik_{\al, \data^\delta}(\signal) \coloneqq \norm{\Ko \signal - \data^\delta}^2 + \alpha \regfun(\signal) \,.
 \end{equation}
\end{linenomath*}
A particularly important and well-studied special case is classical Tikhonov regularization where $\regfun(\signal) = \norm{\signal}^2$. In this case, assuming a singular value decomposition (SVD) $(u_\la, v_\la, \sigma_\la)_{\la \in \La}$ of the operator $\Ko$, the minimizer of the Tikhonov functional can be explicitly computed as $\signal_\alpha^\delta  = \sum_{\la \in \La}   ( \sigma_\la /( \sigma_\la^2 +\alpha)  ) \cdot  \inner{\data^\delta, v_\la} \;  u_\la $.
Opposed to general variational regularization, where the Tikhonov functional  \eqref{eq:tikhonov} has to be minimized via iterative  methods, the SVD based reconstruction can be evaluated directly and efficiently. The same holds true for more general filter based methods where the Tikhonov filter function $\filt_\al(\sigma) = \sigma / ( \sigma^2 +\alpha )$ is replaced by general functions.  Another  typical choice in that context  is the truncation filter $ \filt_\al(\sigma) =  \chi_{[\al, \infty)}(\sigma^2)$ which leads to the also well-known truncated SVD reconstruction  method.

\subsection{Frame-based regularization}

Although the SVD is a very successful tool to design stable inversion schemes for \eqref{eq:ip}, it comes with several shortcomings. Firstly, given a particular linear operator, the SVD may not be known analytically and  hard to compute numerically. Furthermore, the underlying orthonormal basis elements $u_n, v_n$ of the singular system are derived only from the operator $\Ko$ and are not adapted to signal classes of interest. On the other hand it is well known that different  function systems such as wavelet frames  are better adapted to functions classes of practical relevance. Therefore researchers studied frame based regularization methods where the regularizer in \eqref{eq:tikhonov} has the form  $\regfun(\signal) = \sum_{\la \in \La} r_\la(\inner{u_\la, \signal} ) $ for some frame or basis  $(u_\la)_{\la \in \La}$  of $L^2(\R^d)$ and univariate functionals $r_\la$. Such methods have been successful applied and analyzed in various settings  \cite{Daubechies_2004,dicken1996wavelet,Grasmair_2008,hubmer2020atmospheric, hubmer2021tomography, Lorenz_2008,Ramlau_2006,rieder1997wavelet,markusvariation}.

Variational frame based methods have the drawback that they again require computationally costly iterative solution methods. To overcome this issue, diagonal frame decompositions (DFD) have been developed as an alternative tool.  A DFD for an operator $\Ko$ is given as a family $(u_\la, v_\la, \qsv_\la)_{\la \in \La}$, where $(u_\la)_{\la\in \La}$ and  $(v_\la)_{\la\in \La}$ are chosen as frames instead of orthonormal bases as it is in the case of an SVD. Additionally, the $\qsv_\la$ are generalized singular values or so-called quasi singular values which satisfy $\Ko^* v_\la = \qsv_\la u_\la$.
As proposed and analyzed in \cite{ebner2020regularization} such DFDs yield explicit regularization methods  in the form of filtered DFDs
\begin{linenomath*}
\begin{equation} \label{eq:svd-filt}
	\signal_\alpha^\delta  =
	\sum_{\la \in \La} \filt_\al(\qsv_\la)   \,  \inner{\data^\delta, v_\la}  w_\la \,,
\end{equation}
\end{linenomath*}
where  $(\filt_\al)_{\al >0}$ is a regularizing filter and  $(w_\la)_{\la\in \La}$ a dual frame  to $(u_\la)_{\la\in \La}$. Such filtered DFDs combine advantages of filter based and frame based methods.
It may be seen as a generalization of the filtered SVD using redundant frames instead orthogonal  bases.  Precise definitions  of DFDs and  regularizing filters are provided in Definitions \ref{def:fdc} and \ref{def:regfilt} below. DFDs also generalize the wavelet vaguelette decomposition (WVD) introduced in the seminal paper
 \cite{donoho1995nonlinear}; for related constructions using curvelet and shearlet frames  see \cite{candes2002, COLONNA2010232}. DFDs for general frames has bee first  introduced in  \cite{frikel2020sparse} where they are analyzed in combination with soft thresholding. A convergence analysis of filtered DFDs for general linear filters has first been provided in   \cite{ebner2020regularization} and later in  \cite{hubmer2022regularization}.

\subsection{Proposed translation invariant DFD}

Classical frame based reconstruction approaches such as the WVD  lack translation invariance of the underlying frame which introduces well-known  wavelet  artifacts in the reconstruction.  Translation invariant wavelet frames are known  to perform better  in that regard for simple tasks such as denoising  \cite{coifman1995translation,nason1995stationary}.  Thus, it is natural to introduce an extension of the concepts to translation invariant frames for general inverse problems as we will do in the present paper. Furthermore, we present a complete convergence analysis for the  corresponding  filtered translation invariant frame decomposition (TI-DFD).

A translation  invariant  frame of $L^2(\R^d)$, which  is not a frame  in the classical sense, consists  of a family  $(u_\la)_{\la \in \La} \in L^2(\R^d)^\Lambda$ such that any $\signal  \in L^2(\R^d)$ can be stably analyzed and reconstructed via convolutions with $u_\la$.
In this paper we introduce  the TI-DFD of a linear operator as as family $(u_\la, \Vo^*_\la, \qsv_\la)_{\la \in \La}$ where $(u_\la)_{\la \in \La} \in L^2(\R^d) $ is TI-frame, $\Vo^*_\la \colon  \Y \to L^2(\R^d)$ are suitable linear operators and $\qsv_\la >0$ are generalized singular values  with  $ \Vo^*_\la (\Ko \signal) = \qsv_\la  \cdot (u_\la^* \ast \signal)$, and $u_\la^*(x)$ denotes the complex conjugate of $u_\la(-x)$. Given a regularizing  filter $(\filt_\al)_{\al>0}$ and a dual TI-frame $(w_\la)_{\la \in \La} \in L^2(\R^d)$ we will demonstrate that
\begin{linenomath*}
\begin{equation} \label{eq:svd-filt}
	\signal_\alpha^\delta  =
	\sum_{\la \in \La} \filt_\al(\qsv_\la)   \,  ( w_\la \ast (\Vo^*_\la \data^\delta) )
\end{equation}
\end{linenomath*}
is a regularization method and we additionally derive convergence  rates.  Precise definitions of TI frames  and TI-DFDs  are given  in Definitions~\ref{def:ti-frame}, \ref{def:ti-dfd} below.  For illustrative purpose, we will construct an example  of a TI-DFD for the 1D integration operator using  the TI wavelet transform. We will demonstrate that the resulting filtered TI-WVD  works well  and overcomes  wavelet reconstruction artifacts present in the standard WVD.

{\rot Note that the operators defined by \eqref{eq:svd-filt} are linear, and as a result, the TI-DFD belongs the the class of  linear regularization method after a suitable parameter choice is made \cite{engl1996regularization}. However, it is distinct from existing instances and our results do not follow from previously known results in that area. Furthermore, optimal convergence rates are dependent on both the chosen method and the set of solutions under consideration. Among other, interesting extensions of our theory involve the filtered TI-DFD with non-linear filters \cite{donoho1995nonlinear} or the analysis in the presence of stochastic noise  \cite{zhang2022stochastic,pereverzev2013recommendedlegendre}.}

\subsection{Outline}

The remainder of this paper is organized as follows. In Section \ref{sec:dfd}  we  introduce the novel concept of  TI-DFD and provide some of its properties. In Section \ref{sec:reg} we introduce and analyze the proposed  regularization concept in the form of the filtered TI-DFD. We prove  regularization properties and  derive order optimal convergence rates  for an a-priori parameter choice rule. In Section \ref{sec:example}  we construct a TI-DFD for the 1D integral operator for which we support  our theory  by numerical simulation. The paper concludes with a short summary and outlook given in Section~\ref{sec:conclusion}.

\section{TI-DFD of linear operators}
\label{sec:dfd}

Throughout this paper, let $\Ko \colon \dom(\Ko) \subseteq L^2(\R^d) \to \Y$ be a linear, closed and not necessarily bounded operator between  $L^2(\R^d)$ and another Hilbert space $\Y$.

We write $u^\ast \kl{x} \coloneqq \overline{u (-x)}$, where $\overline{z}$ is  the complex conjugate of $z \in \C$. The Fourier transform of  $ \signal \in L^2(\R^d)$ is denoted by $\hat \signal = \Fo \signal $ where $\hat \signal(\xi) \coloneqq \int_{\R^d} f(x) e^{-i \inner{\xi,x}} \diff x$ for integrable functions.   Recall that the Fourier transform turns  convolution into multiplication. In particular for $u, \signal \in L^2(\R^d)$ with $\hat u \in L^\infty(\R^d)$,  the convolution $u^* \ast \signal \in L^2(\R^d)$  is  well defined  by  $(u^* \ast \signal) (x) \int_{\R^d} = \int_{\R^d} u^* (t) \signal(x - t) \diff t $ and satisfies
$u^\ast \ast \signal = \Fo ^{-1} ( (\overline{\Fo u})  \cdot (\Fo \signal) )$. Note that $\tau_a (u \ast \signal) = u \ast (\tau_a \signal)$ where $(\tau_a u )(x) = u (x - a)$ with $a \in \R^d$ is the translation operator and thus the  convolution itself is translation invariant.

\subsection{TI-frames}
\label{sec:TI-frame}

In this subsection we recall the concept of TI-frames and collect properties we will require for our purpose.  Related issues can found in \cite[Sec. 5.2]{mallat}.

\begin{definition}[TI-frame] \label{def:ti-frame}
Let $\Lambda$ be an at most countable index set. A family $ (u_\la)_{\la  \in \La} \in L^2(\R^d)^\La$ is called a translation invariant frame (TI-frame)  for $L^2(\R^d)$ if $\hat u_\la \in L^\infty(\R^d) $ for all $\la \in \La$ and there exist constants $A,B>0$ such that
\begin{linenomath*}
\begin{equation}\label{eq:TI-frame}
\forall f\in L^2(\R^d) \colon \quad A \norm{\signal}^2 \leq \sum_{\la  \in \La} \norm{ u^\ast_\la \ast \signal}^2 \leq B  \norm{\signal}^2  \,.
\end{equation}
\end{linenomath*}
A TI-frame  $ (u_\la)_{\la  \in \La}$ is called tight if  \eqref{eq:TI-frame} holds with TI-frame founds $A=B=1$.
\end{definition}

Because $\hat u_\la \in L^2(\R^d) \cap L^\infty(\R^d)$ and $\hat f \in L^2(\R^d)$, we have $ u^\ast_\la \ast \signal = \Fo^{-1} ( (\overline{ \Fo u_\la}) \cdot (\Fo f)) \in L^2(\R^d) $. From Plancherel's theorem we  get $\norm{ u^\ast_\la \ast \signal}^2  = (2\pi)^{-1} \int_{\R^d} \abs{ \Fo u_\la}^2 \abs{\Fo f}^2$. The right inequality  in the TI-frame property \eqref{eq:TI-frame} therefore in particular implies  $\kl{u^\ast_\la \ast \signal}_{\la  \in \La} \in \ell^2 \kl{\La, L^2(\R^d)}$. Here and  below  for a  Hilbert space $\Hr$ we write $\ell^2 \kl{\La, \Hr}$ for the Hilbert space of all  $\mathbf{c} = (c_\la)_{\la \in \La} \in  \Hr^\La$ satisfying $\norm{\mathbf{c}}_\La^2 \coloneqq \sum_{\la  \in \La} \norm{c_\la}^2 <  \infty$ with  inner product $\inner{\mathbf{c}, \mathbf{d}}_\Lambda \coloneqq \sum_{\la  \in \La} \inner{c_\lambda, d_\lambda}$.

Along with the definition of a TI-frame, it is convenient to introduce the following TI  versions of synthesis, analysis and frame operators.

\begin{definition}[TI-synthesis, TI-analysis and TI-frame operator]\label{def:frame-operators}
Let $(u_\la)_{\la \in \La}$ be a TI-frame.
\begin{enumerate}[label=(\alph*),topsep=0em]
\item
$\Uo \colon \ell^2 \kl{\La, L^2(\R^d)} \to L^2(\R^d) \colon  \kl{c_\la}_{\la  \in \La}\mapsto \sum_{\la  \in \La} u_\la \ast c_\la $ is called TI-synthesis operator.
\item
$\Uo^* \colon L^2(\R^d) \to \ell^2 \kl{\La, L^2(\R^d)} \colon  \signal \mapsto \kl{u^\ast_\la \ast \signal}_{\la  \in \La} $ is called TI-analysis operator.
\item
$\Uo\Uo^* \colon  L^2(\R^d) \to  L^2(\R^d)$ is called  TI-frame operator.
\end{enumerate}
\end{definition}

Note that the TI-analysis operator is the adjoint of the TI-synthesis operator. Using the definition of the TI-analysis operator and the norm on  $\ell^2 \kl{\La, L^2(\R^d)}$, we  can rewrite the frame condition \eqref{eq:TI-frame} as $ \forall f\in L^2(\R^d) \colon A \norm{\signal} \leq \norm{\Uo^* \signal}_\Lambda^2\leq B \norm{\signal} $. The right inequality  in \eqref{eq:TI-frame} states that the TI-analysis operator $\Uo^*$ is a well-defined  bounded linear operator, and the left inequality states  that $\Uo^*$ is  bounded  from below.  In particular, its  Moore-Penrose inverse $(\Uo^*)^\ddag \colon \ell^2 \kl{\Lambda, L^2(\R^d)} \to  L^2(\R^d)$ is  bounded and given by $ (\Uo^*)^\ddag = (\Uo\Uo^*)^{-1} \Uo$. Finally,  $A =  \norm{(\Uo^*)^\ddag}^{-2}$ and $B =  \norm{\Uo^*}^{2}$ are the optimal TI-frame bounds for \eqref{eq:TI-frame}.

Let us collect some further useful properties.

\begin{proposition}[Properties  of TI-frames] \label{prop:ti}Let $(u_\la)_{\la  \in \La} \in L^2(\R^d)^\La$.
\begin{enumerate}[label=(\alph*),topsep=0em]
\item \label{prop-ti1} $(u_\la)_{\la  \in \La}$ TI-frame with bounds $A,B>0$ $\iff$ $2\pi A \leq \sum_{\lambda\in\Lambda} \abs{\hat u_\la}^2 \leq 2\pi B$.
\item\label{prop-ti2} $(u_\la)_{\la  \in \La}$ tight TI-frame  $\iff$ $\sum_{\lambda\in\Lambda} \abs{\hat u_\la}^2 =2\pi$.
\item\label{prop-ti3}
$(u_\la)_{\la  \in \La}$ TI-frame $\Rightarrow$  $(u_\mu^\ddag)_{\mu \in \La}$ with $ u_\mu^\ddag \coloneqq  \Fo^{-1}( 2\pi \hat u_\mu / \sum_{\la \in \Lambda} \abs{\hat u_\la}^2 )$ is a TI-frame.

\item\label{prop-ti4}  $(\Uo^*)^\ddag$  is the synthesis operator of  $(u_\la^\ddag)_{\la \in \La}$.

\item\label{prop-ti5}  $(u_\la)_{\la  \in \La}$  TI-frame $\Rightarrow$  $\id = (\Uo^*)^\ddag \Uo^*$.

\item\label{prop-ti6}  $(u_\la)_{\la  \in \La}$ tight TI-frame $\Rightarrow$   $ \id = \Uo \Uo^*$.

\end{enumerate}
\end{proposition}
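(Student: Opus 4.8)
The plan is to reduce every claim to the Fourier side, where convolution becomes multiplication and the TI-frame condition becomes a pointwise sandwich on $\sum_\la |\hat u_\la|^2$. Set $\Sigma(\xi) \coloneqq \sum_{\la\in\La} \abs{\hat u_\la(\xi)}^2$. By Plancherel, $\norm{u_\la^\ast \ast \signal}^2 = (2\pi)^{-1}\int_{\R^d} \abs{\hat u_\la}^2 \abs{\hat\signal}^2$, so $\sum_\la \norm{u_\la^\ast\ast\signal}^2 = (2\pi)^{-1}\int_{\R^d} \Sigma(\xi)\abs{\hat\signal(\xi)}^2\diff\xi$ (monotone convergence justifies the interchange). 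For \ref{prop-ti1}: if $2\pi A \le \Sigma \le 2\pi B$ pointwise a.e., the sandwich \eqref{eq:TI-frame} follows immediately; conversely, if \eqref{eq:TI-frame} holds, apply it to functions $\signal$ whose $\abs{\hat\signal}^2$ is a normalized approximate identity concentrated near an arbitrary Lebesgue point of $\Sigma$ to recover the pointwise bounds a.e. Part \ref{prop-ti2} is the special case $A=B=1$.

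For \ref{prop-ti3}, note first that a TI-frame has $\Sigma$ bounded above a.e. (by \ref{prop-ti1}) and bounded below a.e.\ away from $0$; hence $\hat u_\mu^\ddag = 2\pi\hat u_\mu/\Sigma$ is a well-defined $L^\infty$ function, and
\begin{linenomath*}
\begin{equation*}
  \sum_{\mu\in\La}\abs{\hat u_\mu^\ddag}^2 = \frac{(2\pi)^2 \sum_\mu \abs{\hat u_\mu}^2}{\Sigma^2} = \frac{(2\pi)^2}{\Sigma},
\end{equation*}
\end{linenomath*}
which lies between $2\pi/B$ and $2\pi/A$, so by \ref{prop-ti1} the dual family is again a TI-frame. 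Part \ref{prop-ti4}: the stated formula $(\Uo^*)^\ddag = (\Uo\Uo^*)^{-1}\Uo$ shows $(\Uo^*)^\ddag$ maps $(c_\la)_\la \mapsto (\Uo\Uo^*)^{-1}\sum_\la u_\la\ast c_\la$; on the Fourier side $\Uo\Uo^*$ is multiplication by $\Sigma/(2\pi)$ (this is just \ref{prop-ti1} read as an operator identity), so $(\Uo^*)^\ddag$ acts by $(c_\la)_\la \mapsto \Fo^{-1}\big(\sum_\la (2\pi\hat u_\la/\Sigma)\,\hat c_\la\big) = \sum_\la u_\la^\ddag \ast c_\la$, i.e.\ it is exactly the TI-synthesis operator of $(u_\la^\ddag)_\la$.

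Parts \ref{prop-ti5} and \ref{prop-ti6} are then corollaries. For \ref{prop-ti5}, $(\Uo^*)^\ddag\Uo^*\signal = \sum_\mu u_\mu^\ddag \ast (u_\mu^\ast \ast \signal)$, which on the Fourier side is multiplication by $\sum_\mu (2\pi\hat u_\mu/\Sigma)\overline{\hat u_\mu} = (2\pi/\Sigma)\sum_\mu \abs{\hat u_\mu}^2 = 2\pi/\Sigma\cdot\Sigma/(2\pi) = 1$, hence equals the identity; alternatively quote $(\Uo^*)^\ddag\Uo^* = (\Uo\Uo^*)^{-1}\Uo\Uo^* = \id$ directly from the Moore-Penrose formula, since $\Uo^*$ is bounded below. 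For \ref{prop-ti6}, tightness gives $\Sigma\equiv 2\pi$, so $\Uo\Uo^*$ is multiplication by $1$, i.e.\ $\id = \Uo\Uo^*$. The main obstacle is the converse direction of \ref{prop-ti1}: turning the integral inequality \eqref{eq:TI-frame}, which holds for all $\signal\in L^2$, into a pointwise a.e.\ statement about $\Sigma$ requires a careful Lebesgue-point / approximate-identity argument (and a measurability remark, since $\Sigma$ is a countable sum of measurable functions and hence measurable); everything else is bookkeeping on the Fourier side.
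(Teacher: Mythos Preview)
Your proposal is correct and follows essentially the same route as the paper: reduce everything to the Fourier side via Plancherel for \ref{prop-ti1}--\ref{prop-ti2}, do the straightforward multiplier computations for \ref{prop-ti3}--\ref{prop-ti4}, and invoke the Moore--Penrose identity $(\Uo^*)^\ddag\Uo^*=\id$ (valid because $\Uo^*$ is bounded below) together with $(\Uo^*)^\ddag=\Uo$ in the tight case for \ref{prop-ti5}--\ref{prop-ti6}. The paper's own proof is extremely terse (one sentence per pair of items), so your write-up is in fact a faithful unpacking of exactly what the authors had in mind; in particular, the Lebesgue-point argument you flag for the converse of \ref{prop-ti1} is the only genuinely non-automatic step, and the paper simply absorbs it into the phrase ``follow from \eqref{eq:TI-frame} and the Plancherel theorem.''
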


\begin{proof}
Items  \ref{prop-ti1},  \ref{prop-ti2} follow from \eqref{eq:TI-frame} and the Plancherel theorem. Item \ref{prop-ti3}, \ref{prop-ti4} follow from \ref{prop-ti1} and  straight forward computations. Items \ref{prop-ti5}, \ref{prop-ti6} hold because the Moore-Penrose inverse  is a right inverse  and $(\Uo^*)^\ddag = \Uo$  for TI-tight frames.\end{proof}

\begin{definition}[Dual TI-frame]
Let $(u_\la)_{\la \in \La}$, $(w_\la)_{\la \in \La}$ be TI-frames  with TI-sythesis operators $\Uo$ and $\Wo$. We call $(w_\la)_{\la \in \La}$ a dual TI-frame to  $(u_\la)_{\la \in \La}$ if $\Wo \Uo^* =  \id$.
\end{definition}

On easily verifies that  dual frame condition is equivalent to  the identity $ \sum_{\lambda} (\Fo w_\la) \cdot ( \overline{\Fo u_\la}) = 2\pi$. Moreover, in this case,
\begin{linenomath*}
\begin{equation} \label{eq:inv-dual}
	\forall \signal \in L^2(\R^d) \colon \quad
	\signal = \Wo \Uo^* \signal = \sum_{\la  \in \La} w_\la \ast (u^\ast_\la \ast \signal)  \,.
\end{equation}
\end{linenomath*}
The reproducing formula  \eqref{eq:inv-dual}  in particular holds for  $(u_\la^\ddag)_{\la \in \La} = (w_\la)_{\la \in \La} $, which is referred to as the canonical  dual TI-frame. From  \ref{prop-ti1}, \ref{prop-ti3}  it follows that that frame bounds $A$ and $B$ for $(u_\la)_{\la\in\La}$ give frame bounds $B^{-1}$ and $A^{-1}$ of $(u_\la^\ddag)_{\la \in \La}$. According  to the characterization  via the Moore-Penrose inverse, the canonical dual applied to coefficients $\mathbf{c} = (c_\la)_{\la \in \La} \in \ell^2(\La, L^2(\R^d))$ is characterized as minimizer of the least square functional  $ f \mapsto \norm{\Uo^* \signal - \mathbf{c}}^2$.  In some applications other left inverses  and dual TI-frames may be of interest.

We conclude this subsection by drawing some connections to classical frames. This will be part  of our reasoning for introducing  TI-DFDs as additional tool besides the  DFD and SVD.

\begin{remark}[Frames versus  TI-frames] \mbox{}
We again point out that a TI-frame is not a frame in the classical sense.  A family $ (u_{\la, k})_{(\la, k)  \in \La \times \Z^n } \in  L^2(\R^d)^{\La \times \Z^n}$ for some parameter $n \in \N$ is a frame of $L^2(\R^d)$ (in the classical sense)  if there exist frame bounds $A, B \in (0, \infty)$ such that
\begin{linenomath*}
\begin{equation} \label{eq:frame}
	\forall \signal  \in L^2(\R^d) \colon \quad
	A \norm{\signal}^2 \leq \sum_{\la \in \La}\sum_{k  \in  \Z^n } \abs{\inner{ \signal , u_{\la, k}}}^2 \leq B  \norm{\signal}^2 \,.
\end{equation}
\end{linenomath*}
Note that we use the index set $\La \times \Z^n$  for the frame in order to  make the comparison with TI frames more obvious and to be closer to the notion of wavelet frames. The identity $u^\ast_\la \ast \signal (x)  = \inner{ \signal  ,  u_\la ( (\,\cdot\, ) - x)}  $ shows that a TI-frame may be seen as a generalized notion of a frame using the semi-discrete index set $\La \times \R^d$, frame elements $ u_{\la,x} = u_\la ( (\,\cdot\, ) - x)$  and the squared $L^2$-norm replacing the inner $\ell^2$-norm in \eqref{eq:frame}.

Conversely, classical  frames can be obtained from TI-frames by discretizing the convolutions $u^\ast_\la \ast \signal $. As an example in the context of wavelet analysis, consider a one dimensional mother wavelet $u \in L^2(\R)$ and set $u_j(x)  \coloneqq   2^j u(2^j x) $.  The family $(u_{j})_{j\in \Z}$ is a TI-frame if $A \leq \sum_{j \in \Z} \abs{\hat u(2^{-j} \omega)}^2 \leq B$.  Under additional assumptions \cite{tenlecturesonwavelets} the   family $(u_{j,k})_{j,k \in \Z}$ defined  by $u_{j,k} (x) =  2^{-j/2}  u_j(x- 2^j k)$ becomes  a frame of  $L^2(\R)$. Note that the  sampling step size $2^j$ for constructing a frame depends on the scale $j$, which in particular destroys the translation invariance of the underlying TI-frame.
\end{remark}

\subsection{Review: diagonal frame decomposition}

Before  actually introducing   TI-DFD in the next subsection, we first review the WVD and more  general DFDs of linear operators. The WVD was introduced in \cite{donoho1995nonlinear} for several integral operators.  The  more general  notation  of a DFD used below is taken from \cite{ebner2020regularization}.

 \begin{definition}[DFD]\label{def:fdc} The system $ \kl{u_{\la,k}, v_{\la,k}, \qsv_{\la,k}}_{(\la, k)  \in \La \times \Z^{n} }$, for $n\in \N$ and countable $\La$, is called DFD for $\Ko$, if the following properties hold:
\begin{enumerate}[itemindent=2em, leftmargin=3em,  label=(DFD\arabic*), topsep=0em]
\item \label{dfd1} $(u_{\la,k})_{\la,k \in \La \times \rot \Z^n}$ is a frame of  $L^2(\R^d)$.
\item \label{dfd2} $(v_{\la,k})_{\la,k \in \La \times \rot \Z^n}$ is a frame of  $\overline{\ran\Ko}$.
\item \label{dfd3}  $ \forall (\la, k) \in \La \times \Z^n \colon  \Ko^* v_{\la, k} = \qsv_\la  u_{\la, k}$.
\end{enumerate}
 \end{definition}

If $\kl{u_{j, k}}_{(j,k) \in \Z \times \rot \Z^n}$ is a wavelet frame  we refer to  the DFD as  WVD. The elements $v_{\la, k}$ in this case  are called vaguelettes. With  a  dual frame $(w_{\la, k})_{(\la,k) \in \La \times \Z^n }$, the following reproducing formula holds for all $f \in \dom (\Ko)$:
\begin{linenomath*}
\begin{equation}\label{eq:wvd-rep}
\signal = \sum_{\la   \in \La} \frac{1}{\qsv_\lambda} \sum_{k \in \Z^{n}}  \inner{\Ko \signal, v_{\la,k}}  w_{\la,k} \,.
\end{equation}
\end{linenomath*}
The DFD includes the SVD, in which case $n=0$  and  $(u_{\la,0})_{\la \in \La}$ and  $(v_{\la,0})_{\la \in \La}$ are orthonormal bases, and the WVD  where $n=d$ and  $\kl{u_{\la, k}}_{(\la,k) \in \La \times \Z^d}$ is a wavelet frame. As mentioned in the introduction the WVD  can be better adapted to the signal class compared to the SVD.

Wavelet frames suffer from specific artifacts  after coefficient filtering, mainly due to the lack of translation invariance.  Therefore we will develop a related concept  using TI-frames instead of frames.   In order to further motivate our approach, below  we give a representation of the WVD   using  sampled convolutions and point out where TI invariance can be restored.

\begin{remark}[WVD in sampled convolution form] \label{rem:wvd-conv}
Consider a 1D orthonormal wavelet basis $(u_{j,k})_{j,k \in \Z} $ of $L^2(\R)$ with $ u_{j,k} (x) \coloneqq  2^{-j/2} u ( x - 2^j k ) $ and $u_j(x) \coloneqq 2^j u (2^j x )$ with mother wavelet $u \in L^2(\R)$.   Moreover, let $(u_{j,k},v_{j,k}, \qsv_j)_{j,k  \in \Z}$ be the corresponding WVD for  $\Ko$.  For  $\signal \in L^2(\R)$ and $\mathbf{a} = (a_k)_{k\in \Z} \in \ell^2(\Z)$ define the downsampled and upsampled  convolutions  $	(u^\ast_j \circledast_j f)_k \coloneqq   2^{-j/2}  \cdot  (u^\ast_j \ast  f)     (2^jk )   $ and $(u_j \circledast_j \mathbf{a})(x) \coloneqq   2^{-j/2} \sum\nolimits_{k\in \Z}  a_k u_j (x-k2^j)$ respectively. The WVD reconstruction formula can be written as
\begin{linenomath*}
\begin{align} \label{eq:wvd-conv1}
 \signal   &= \sum_{j\in \Z} u_j \circledast_j  \bigl( u^\ast_j \circledast_j \signal \bigr)  \,,
\\ \label{eq:wvd-conv2}
u^\ast_j \circledast_j \signal &= \frac{1}{\qsv_j} (\inner{\Ko f, v_{j,k}})_{k\in \Z} \,.
\end{align}
\end{linenomath*}
In particular, level-depending downsampling and upsampling destroys the translational invariance of the system $(u_j)_{j \in \Z}$. Note that  \eqref{eq:wvd-conv1}, \eqref{eq:wvd-conv2} is  equivalent to  \eqref{eq:wvd-rep}. The proposal of this paper can be seen as a way to restore translational invariance by replacing the  sampled convolutions  in \eqref{eq:wvd-conv1} by the non-sampled counterparts and to modify \eqref{eq:wvd-conv2} accordingly.
\end{remark}

\subsection{Introducing  the TI-DFD}
\label{ssec:ti-dfd}

We now introduce the TI-DFD as the central concept of this  paper.  We denote by $B(\Y, L^2(\R^d))$ the space of all bounded linear operators  from $\Y$ to $L^2(\R^d)$.

\begin{definition}[TI-DFD]\label{def:ti-dfd} We call the system $ \kl{u_\la, \Vo^*_\la, \qsv_\la}_{\la  \in \La}$ a translation invariant frame decomposition (TI-DFD) for $\Ko$, if the following properties hold:
\begin{enumerate}[itemindent=1em, leftmargin=2em,  label=(TI\arabic*), topsep=0em]

\item \label{ti1} $ \kl{u_\la}_{\la  \in \La} \in L^2(\R^d)^\Lambda$ is a TI-frame for $L^2(\R^d)$.

\item \label{ti2} $\forall \la \in \La$ we have $ \Vo^*_\la \in  B(\Y, L^2(\R^d))$ and $\forall g \in \overline{\ran{\Ko}}\colon \sum_{\la  \in \La} \norm{\Vo^*_\la \data}^2 \asymp \norm{g}^2$.

\item \label{ti3} $\forall \la \in \La \colon  \qsv_\la \in (0, \infty)$ and $\forall f\in \dom(\Ko) \colon  \Vo^*_\la (\Ko \signal)   = \qsv_\la \, (u^\ast_\la \ast \signal ) $.
\end{enumerate}
\end{definition}

Let us compare  a TI-DFD  to a regular DFD $ \kl{u_{\la, k}, v_{\la, k}, \qsv_\la}_{(\la, k) \in \La \times \Z^d}$, where $u_{\la, k}(x) = u_{\la} (x - M_\la  k)$ with sampling matrix $M_\la \in \R^{d \times d}$.   Among others, such forms includes WVDs and DFDs with curvelet or shearlet frames. For example, for  the 1D wavelet transform (Remark~\ref{rem:wvd-conv}) we have $\La =\Z$, and for the 2D wavelet transform we have $\La = \Z \times \{\mathrm{H},\mathrm{V},\mathrm{D}\}$ representing  horizontal (H), vertical (V)  and diagonal (D) wavelets at scale $j\in \Z$.  In such situations,  \ref{ti1}, \ref{ti2} replace the frame conditions \ref{dfd1}, \ref{dfd2}. In item \ref{ti3}, $u^\ast_\la \ast \signal$  takes over the role  of the inner products $\inner{ u_{\la, k} , \signal}  = (u^\ast_\la \ast \signal) (M_\la  k)$ and  $\Vo^*_\la \data $ takes over the role of  $(\inner{ v_{\la, k} , g})_{k \in \Z^d}$.  Finally,  the identity  $ \Vo^*_\la \Ko \signal = \qsv_\la \, u^\ast_\la \ast \signal $ replaces the quasi-singular value relation $\inner{ v_{\la, k} , \Ko f} = \qsv_\la \inner{ u_{\la, k} , \signal} $. In particular, the standard DFD may be seen as kind of discretization of the TI-frame decomposition, where the discretization depends on the index $\la$.

\begin{remark}[Operator $\Vo^*$]
According to  \ref{ti2} there exist constants $A, B \in (0, \infty)$ such that   $ A  \norm{\data}^2  \leq \sum_{\la  \in \La} \norm{\Vo^*_\la \data}^2  	\leq 	B \norm{\data}^2$ for all $g \in \overline{\ran{\Ko}}$. Equivalently, the operator
\begin{linenomath*}
\begin{equation} \label{eq:V}
\Vo^*   \colon \Y \to \ell^2(\La, L^2(\R^d)) \colon f \mapsto (\Vo^*_\la f)_{\la \in \La}
\end{equation}
\end{linenomath*}
is well defined, bounded and  bounded from below  and above with norm bounds $\norm{\Vo^*} \leq B^{1/2}$ and   $\norm{(\Vo^*)^\ddag} \leq A^{-1/2}$. It plays  plays a similar role in $\Y$ as the TI-analysis operator $\Uo^*$ does in  $L^2(\R^d)$. However, operator $\Vo^*$ is  not of the convolution form in general.
\end{remark}

Similar to the standard  DFD we have the  following TI-DFD reconstruction formula.

\begin{proposition}[Exact reconstruction formula  via TI-DFD]
Let $(u_\la,\Vo^*_\la, \qsv_\la)_{\la \in \La}$ be a TI-DFD for $\Ko$ and $(w_\la)_{\la \in \La}$ be a dual TI-frame for  $(u_\la)_{\la \in \La}$. Then
\begin{linenomath*}
\begin{equation}\label{eq:ti-recon}
\forall \data \in \ran(\Ko) \colon  \quad \Ko^{-1}\data = \sum_{\lambda\in\Lambda}   w_\lambda  \ast ( \qsv_\lambda^{-1} \cdot (\Vo^*_\la \data) ) = \Wo \bigl( (\qsv_\la^{-1}  \Vo^*_\la \data)_{\la \in \La}  \bigr) \,.
\end{equation}
\end{linenomath*}
\end{proposition}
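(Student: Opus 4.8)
The plan is to start from an arbitrary $\data \in \ran(\Ko)$, write $\data = \Ko \signal$ for some $\signal \in \dom(\Ko)$, and track this $\signal$ through the TI-DFD relations. The only subtlety is that $\Ko^{-1}$ presupposes injectivity of $\Ko$, or at least a selection of preimage; I would either assume $\Ko$ injective (consistent with the notation $\Ko^{-1}\data$ already used in the statement) or note that the right-hand side is independent of the chosen preimage because it only sees $\Ko\signal$ through $\Vo^*_\la$. Granting this, the argument is essentially a one-line chain of substitutions.

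First I would apply property \ref{ti3}: for the chosen $\signal$ we have $\Vo^*_\la(\Ko\signal) = \qsv_\la \,(u^\ast_\la \ast \signal)$ for every $\la \in \La$, hence $\qsv_\la^{-1}\,\Vo^*_\la \data = u^\ast_\la \ast \signal$ (using $\qsv_\la \in (0,\infty)$ from the same item). Next I would observe that the coefficient family $(u^\ast_\la \ast \signal)_{\la \in \La}$ lies in $\ell^2(\La, L^2(\R^d))$ — this is exactly the content of the upper TI-frame bound in \ref{ti1}, spelled out in the discussion after Definition~\ref{def:ti-frame} — so that applying the TI-synthesis operator $\Wo$ of the dual frame is legitimate and the series converges in $L^2(\R^d)$. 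Then I would invoke the dual TI-frame identity \eqref{eq:inv-dual}, namely $\Wo\Uo^* = \id$, which gives
\begin{linenomath*}
\begin{equation*}
\sum_{\la \in \La} w_\la \ast (u^\ast_\la \ast \signal) = \Wo\Uo^*\signal = \signal \,.
\end{equation*}
\end{linenomath*}
Substituting $u^\ast_\la \ast \signal = \qsv_\la^{-1}\,\Vo^*_\la\data$ back in yields $\signal = \sum_\la w_\la \ast (\qsv_\la^{-1}\,\Vo^*_\la\data) = \Wo\bigl((\qsv_\la^{-1}\Vo^*_\la\data)_{\la\in\La}\bigr)$, and since $\signal = \Ko^{-1}\data$ this is precisely \eqref{eq:ti-recon}.

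I do not expect a serious obstacle here; the statement is a direct consequence of \ref{ti3} plus the reproducing formula \eqref{eq:inv-dual} for dual TI-frames. The only points that need a word of care are (i) confirming that $(\qsv_\la^{-1}\Vo^*_\la\data)_{\la}$ is a valid input for $\Wo$, which follows from the boundedness of $\Vo^*$ recorded in the remark after \eqref{eq:V} (or equivalently from \ref{ti2} applied to $\data \in \ran(\Ko) \subseteq \overline{\ran\Ko}$) together with \ref{ti3}, and (ii) the well-definedness of $\Ko^{-1}\data$ as discussed above. Everything else is a substitution, and the second equality in \eqref{eq:ti-recon} is just the definition of the TI-synthesis operator $\Wo$ from Definition~\ref{def:frame-operators}.
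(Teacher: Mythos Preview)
Your proposal is correct and follows exactly the same route as the paper: write $\data = \Ko\signal$, use \ref{ti3} to rewrite $\qsv_\la^{-1}\Vo^*_\la\data$ as $u^\ast_\la \ast \signal$, and then apply the reproducing formula~\eqref{eq:inv-dual}. The paper's proof is the one-line version of precisely this substitution, without the extra remarks on well-definedness that you (reasonably) add.
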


\begin{proof}
With  $ \data  = \Ko \signal$, identity \eqref{eq:ti-recon} follows  after inserting  \ref{ti3} into~\eqref{eq:inv-dual}.
\end{proof}

Similar  to the SVD and the  DFD reconstruction formulas,  \eqref{eq:ti-recon} reflects the ill-posedness of $\Ko^{-1}$ in terms of the quasi-singular values that are potentially accumulating  at zero.  More precisely, we have the following results.

\begin{theorem}[Characterization of ill-posedness via TI-DFD]\label{thm:char}
Let $(u_\la,\Vo^*_\la, \qsv_\la)_{\la \in \La}$ be a TI-DFD of $\Ko$. Then  the following hold.
\begin{enumerate}[label=(\alph*), topsep=0em]
\item\label{thm:char1} $\inf_{\la \in \La}  \qsv_\la >0$ $\Rightarrow$  $\Ko^{-1}$ is bounded.
\item\label{thm:char2} $ \inf_{\la \in \La}  \qsv_\la =0$ and   $\inf_\la \norm{\Vo^*_\la} >0$
$ \Rightarrow $ $\Ko^{-1}$  unbounded.
\end{enumerate}
\end{theorem}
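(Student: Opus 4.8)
The plan is to reduce both claims to one identity that links $\Ko^{-1}$ with the TI-analysis operator $\Uo^*$ through the quasi-singular values. First note that property \ref{ti3} together with the lower bound in the TI-frame inequality \eqref{eq:TI-frame} forces $\Ko$ to be injective on $\dom(\Ko)$: if $\Ko\signal=0$ then $\qsv_\la\,(u^\ast_\la\ast\signal)=\Vo^*_\la 0=0$ for every $\la$, hence $\Uo^*\signal=0$ and so $\signal=0$; thus $\Ko^{-1}\colon\ran(\Ko)\to\dom(\Ko)$ is well defined (this is also implicit in the reconstruction formula \eqref{eq:ti-recon}). For $\data\in\ran(\Ko)$, writing $\signal\coloneqq\Ko^{-1}\data$ and inserting $\data=\Ko\signal$ into \ref{ti3} gives $u^\ast_\la\ast\signal=\qsv_\la^{-1}\Vo^*_\la\data$ for all $\la$, that is,
\begin{linenomath*}
\begin{equation}\label{eq:plan-core}
\Uo^*\bigl(\Ko^{-1}\data\bigr)=\bigl(\qsv_\la^{-1}\,\Vo^*_\la\data\bigr)_{\la\in\La}\,,\qquad \data\in\ran(\Ko)\,.
\end{equation}
\end{linenomath*}
I will combine \eqref{eq:plan-core} with the TI-frame bounds $A\norm{\signal}^2\le\norm{\Uo^*\signal}_\La^2\le B\norm{\signal}^2$ from \ref{ti1} and the bounds $A'\norm{\data}^2\le\sum_{\la\in\La}\norm{\Vo^*_\la\data}^2\le B'\norm{\data}^2$ from \ref{ti2} (applicable since $\ran(\Ko)\subseteq\overline{\ran\Ko}$) to estimate $\norm{\Ko^{-1}\data}$ from above and below.

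For (a), set $c\coloneqq\inf_{\la}\qsv_\la>0$, so $\qsv_\la^{-2}\le c^{-2}$ for all $\la$. Then \eqref{eq:plan-core} and the lower TI-frame bound give
\begin{linenomath*}
\begin{equation*}
A\,\norm{\Ko^{-1}\data}^2\;\le\;\norm{\Uo^*\Ko^{-1}\data}_\La^2\;=\;\sum_{\la\in\La}\qsv_\la^{-2}\norm{\Vo^*_\la\data}^2\;\le\;c^{-2}B'\,\norm{\data}^2\,,
\end{equation*}
\end{linenomath*}
hence $\norm{\Ko^{-1}\data}\le(B'/(Ac^2))^{1/2}\norm{\data}$ for every $\data\in\ran(\Ko)$; thus $\Ko^{-1}$ is bounded (and extends continuously to $\overline{\ran\Ko}$).

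For (b), choose $\la_n$ with $\qsv_{\la_n}\to0$ and put $\gamma\coloneqq\inf_\la\norm{\Vo^*_\la}>0$. For each $n$ I pick a unit vector $\data_n\in\ran(\Ko)$ with $\norm{\Vo^*_{\la_n}\data_n}\ge\gamma/2$; such $\data_n$ exists because the norm of $\Vo^*_{\la_n}$ is (essentially) attained on $\overline{\ran\Ko}$, in which $\ran(\Ko)$ is dense, after a small perturbation and renormalization. Then, keeping only the $\la_n$-th term in \eqref{eq:plan-core} and using the upper TI-frame bound,
\begin{linenomath*}
\begin{equation*}
B\,\norm{\Ko^{-1}\data_n}^2\;\ge\;\norm{\Uo^*\Ko^{-1}\data_n}_\La^2\;\ge\;\qsv_{\la_n}^{-2}\norm{\Vo^*_{\la_n}\data_n}^2\;\ge\;\frac{\gamma^2}{4\,\qsv_{\la_n}^{2}}\,,
\end{equation*}
\end{linenomath*}
so $\norm{\Ko^{-1}\data_n}\ge\gamma/(2\sqrt{B}\,\qsv_{\la_n})\to\infty$ while $\norm{\data_n}=1$; hence $\Ko^{-1}$ is unbounded. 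The two inequality chains are routine once \eqref{eq:plan-core} is established, and I expect the only delicate point to be the selection of the test data $\data_n$ inside $\ran(\Ko)$ rather than in all of $\Y$: the hypothesis $\inf_\la\norm{\Vo^*_\la}>0$ should therefore be read as a lower bound on the norm of $\Vo^*_\la$ restricted to $\overline{\ran\Ko}$ — equivalently, it uses that $\Vo^*_\la$ is effectively supported on $\overline{\ran\Ko}$ (or that $\Ko$ has dense range), as is the case for the TI-DFDs constructed later in the paper.
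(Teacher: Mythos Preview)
Your argument is correct and follows essentially the same route as the paper's proof: both derive a two-sided estimate for $\norm{\Ko^{-1}\data}$ in terms of $\norm{(\qsv_\la^{-1}\Vo^*_\la\data)_{\la\in\La}}$ (the paper via the dual synthesis operator $\Wo$ and the reconstruction formula \eqref{eq:ti-recon}, you via the analysis operator $\Uo^*$ and \ref{ti3} directly) and then read off (a) and (b) from the upper and lower bounds respectively. Your treatment of (b) is in fact more careful than the paper's --- you explicitly construct test data in $\ran(\Ko)$ and correctly flag that $\norm{\Vo^*_\la}$ must be understood as the norm on $\overline{\ran\Ko}$, a subtlety the paper's operator-norm argument glosses over.
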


\begin{proof}
{According to the reconstruction formula \eqref{eq:ti-recon}  we have the identities $\Ko^{-1}\data = \Wo \bigl( (\qsv_\la^{-1}  \Vo^*_\la \data)_{\la \in \La}  \bigr)$ and $ \Wo^\ddag  \Ko^{-1}\data = (\qsv_\la^{-1}  \Vo^*_\la \data)_{\la \in \La}$. Therefore, by definition of the operator norm,
\begin{linenomath*}
\begin{equation} \label{eq:char-aux}
\frac{ \norm{(\qsv_\la^{-1}  \Vo^*_\la)_{\la \in \La}(g)}}{\norm{\Wo^\ddag}}
\leq \norm{\Ko^{-1} g}   \leq \frac{\norm{\Wo} \norm{\Vo^*} \norm{g} }{(\inf_{\la \in \La}  \qsv_\la)}   \,.
\end{equation}
\end{linenomath*} }
The  right inequality  gives  \ref{thm:char1}. To  verify Item~\ref{thm:char2}, suppose  $\inf_{\la \in \La}  \qsv_\la = 0$ and $\inf_\la \norm{\Vo^*_\la} >0$.   Because   $\norm{(\qsv_\la^{-1}  \Vo^*_\la)_{\la \in \La}} \geq   \abs{\qsv_\mu}^{-1} \norm{ \Vo^*_\mu}$  for all  $\mu \in \La$ this implies that $(\qsv_\la^{-1}  \Vo^*_\la)_{\la \in \La}$ is unbounded.  Together with the left inequality in \ref{eq:char-aux} this shows  the  unboundedness of $\Ko^{-1}$.
\end{proof}

In summary, under the reasonable assumption that $\inf_\la \norm{\Vo^*_\la} >0$, the inverse operator  $\Ko^{-1}$ is unbounded  if and only  if the quasi-singular values $\qsv_\la$ accumulate at zero.

We note that for a stable inverse problem a TI-DFD has been constructed in \cite[Theorem 3.5]{zangerl2021multiscale} in the form of a convolution factorization for the wave equation. An example for a  TI-DFD in the ill-posed situation  will be  constructed in Section~\ref{sec:example} for the 1D integration operator.

\section{Regularization by filtered TI-DFD}
\label{sec:reg}

Throughout this section, let $(u_\la,\Vo^*_\la, \qsv_\la)_{\la \in \La}$ be a TI-DFD for  $\Ko$, and $(w_\la)_{\la \in \La}$ be a dual frame of $(u_\la)_{\la \in \La}$.  Typically,   solving inverse problems of the form  \eqref{eq:ip} is unstable (see Theorem \ref{thm:char}) and hence need to be  regularized.   For that purpose, we introduce and analyze the concept of filtered TI-DFD in this section.

\subsection{Definition of the filtered TI-DFD}

We first  recall  the definition  of  regularizing filters. We adopt  the Definition from  \cite{ebner2020regularization}  which is slightly more general than the standard definition   \cite{engl1996regularization}.

\begin{definition}[Regularizing filter] \label{def:regfilt} A family $(\filt_\al)_{\alpha>0}$ of piecewise continuous functions $\filt_\al \colon (0, \infty) \to \R$ is called a regularizing filter if the following hold:
\begin{enumerate}[itemindent=2em, leftmargin=2em,  label=(F\arabic*), topsep=0em]
\item\label{def:regfilt1}  $\forall \alpha >  0 \colon \norm{\filt_\al}_\infty < \infty$.
\item\label{def:regfilt2} $\exists C > 0 \colon \sup \{\abs{\qsv \filt_\al(\qsv) } \colon \alpha > 0 \wedge \qsv  \geq 0\} \leq C$.
\item\label{def:regfilt3} $\forall \qsv \in (0, \infty) \colon \lim_{\alpha \to 0} \filt_\al (\qsv) = 1/\qsv$.
\end{enumerate}
\end{definition}

Using the concept of regularizing filters, we study the following filtered versions of the TI-DFD reconstruction formula.

\begin{definition}[Filtered TI-DFD] \label{def:filteredTIDFD}
Let $(\filt_\al)_{\alpha>0}$ be a regularizing filter. We call the family  $(\reg^\filt_\al)_{\alpha>0}$  of operators $\reg^\filt_\al \colon \Y  \to L^2(\R^d) $ defined by
\begin{linenomath*}
\begin{equation} \label{eq:ti-filt}
\reg^\filt_\al  g \coloneqq  \sum_{\lambda\in\Lambda}  w_\lambda \ast ( \filt_\al (\qsv_\lambda)  \cdot  (\Vo^*_\la g ) )   =    \Wo  \bigl( (\filt_\al (\qsv_\lambda) \cdot   \Vo^*_\la \data)_{\la \in \La}  \bigr)
\end{equation}
\end{linenomath*}
the filtered TI-DFD according to the filter $(\filt_\al)_{\alpha>0}$ and the TI-DFD $(u_\la,\Vo^*_\la, \qsv_\la)_{\la \in \La}$.
\end{definition}

We first show the  well-posedness of filtered TI-DFD.

\begin{proposition}[Existence and stability] \label{prop:well}
Let $(\filt_\al)_{\alpha>0}$ be a regularizing filter.
For any $\alpha >0$, the operator $\reg^\filt_\al$ as in \eqref{eq:ti-filt} is well defined, linear  and bounded with  $\norm{\reg^\filt_\al} \leq \norm{\filt_\al}_\infty \norm{\Wo}\norm{\Vo^*} $.
\end{proposition}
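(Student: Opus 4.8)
The plan is to exhibit $\reg^\filt_\al$ as a composition of three bounded linear operators and then invoke submultiplicativity of the operator norm. To this end, introduce the diagonal multiplier
$M_\al \colon \ell^2(\La, L^2(\R^d)) \to \ell^2(\La, L^2(\R^d))$, $(c_\la)_{\la\in\La} \mapsto (\filt_\al(\qsv_\la)\,c_\la)_{\la\in\La}$; then the defining relation \eqref{eq:ti-filt} reads $\reg^\filt_\al = \Wo\,M_\al\,\Vo^*$, where $\Vo^*$ is the operator from \eqref{eq:V} and $\Wo$ is the TI-synthesis operator of the dual TI-frame $(w_\la)_{\la\in\La}$.

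It remains to collect the norm bounds for the three factors. First, by \ref{ti2} (see the Remark on the operator $\Vo^*$), the map $\Vo^* \colon \Y \to \ell^2(\La, L^2(\R^d))$ is well defined, linear and bounded, with $\norm{\Vo^*} = \norm{\Vo}$. Second, the multiplier $M_\al$ is well defined, linear and bounded with $\norm{M_\al} \le \norm{\filt_\al}_\infty$: indeed, by \ref{def:regfilt1} we have $\abs{\filt_\al(\qsv_\la)} \le \norm{\filt_\al}_\infty < \infty$ uniformly in $\la$, so that for every $\mathbf{c} = (c_\la)_{\la\in\La} \in \ell^2(\La, L^2(\R^d))$,
\[
\norm{M_\al \mathbf{c}}_\La^2 = \sum_{\la\in\La} \abs{\filt_\al(\qsv_\la)}^2 \norm{c_\la}^2 \le \norm{\filt_\al}_\infty^2 \norm{\mathbf{c}}_\La^2 \,.
\]
Third, $\Wo$ is bounded on $\ell^2(\La, L^2(\R^d))$ because it is the TI-synthesis operator of the TI-frame $(w_\la)_{\la\in\La}$, i.e.\ the adjoint of the bounded TI-analysis operator $\Wo^*$; in particular the series $\sum_{\la} w_\la \ast c_\la$ converges in $L^2(\R^d)$ for every $\mathbf{c} \in \ell^2(\La, L^2(\R^d))$.

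Composing, $\reg^\filt_\al = \Wo M_\al \Vo^*$ is well defined, linear and bounded on $\Y$, and $\norm{\reg^\filt_\al} \le \norm{\Wo}\,\norm{M_\al}\,\norm{\Vo^*} \le \norm{\filt_\al}_\infty \norm{\Wo}\norm{\Vo}$, which is the asserted estimate. I do not expect any real obstacle here; the only point worth a line of justification is that the infinite sum in \eqref{eq:ti-filt} genuinely converges and defines a bounded operator $\Y \to L^2(\R^d)$, and this is exactly what is encoded in the boundedness of $\Wo$ on $\ell^2(\La, L^2(\R^d))$ established in Section~\ref{sec:TI-frame}, together with the fact that $M_\al \Vo^* g \in \ell^2(\La, L^2(\R^d))$ for every $g \in \Y$.
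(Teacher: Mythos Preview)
Your argument is correct and is essentially the same as the paper's: both factor $\reg^\filt_\al$ through $\Vo^*$, the diagonal filter multiplication, and $\Wo$, then bound each piece. Your version is slightly more explicit in naming the diagonal multiplier $M_\al$ and noting that its norm is $\le \norm{\filt_\al}_\infty$, but the substance is identical.
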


\begin{proof}
Fix $\alpha >0$ and let $\data \in \ran (\Ko)$. Clearly $\reg^\filt_\al$ is a linear operator. From the upper frame property of $(w_\la)_{\la \in \La}$, the boundedness of the filters $\filt_\al$ (see \ref{def:regfilt1})  and the boundedness of $\Vo^*$ we obtain $\norm{\reg^\filt_\al  g}  =  \lVert \Wo \bigl( (\filt_\al (\qsv_\lambda) \cdot   \Vo^*_\la \data)_{\la \in \La}  \bigr) \rVert
\leq \norm{\filt_\al}_\infty \norm{\Wo}   \norm{\Vo^*} \norm{g}$. This shows, that $\reg^\filt_\al$ is well defined and  gives the claimed norm estimate.
\end{proof}

\subsection{Convergence analysis}

For  the following, let $(\filt_\al)_{\alpha>0}$ be a regularizing filter and $\kl{\reg^\filt_\al}_{\alpha > 0}$ be the filtered TI-DFD defined in \eqref{eq:ti-filt}. In what follows, we show that the filtered TI-DFD yields a regularization method. To this end,  we first recall the definition of a regularization method.

\begin{definition}
Let   $(\reg_\alpha)_{\alpha>0}$ be a family of bounded linear operators $\reg_\alpha \colon \Y \to L^2(\R^d)$, let $g \in \ran (\Ko)$ and $\tilde\al \colon (0, \infty) \times \Y \to (0, \infty)$. The pair $\kl{\kl{\reg_\alpha}_{\alpha > 0}, \tilde\al}$ is a regularization method for the solution of $\Ko \signal = g$, if
\begin{linenomath*}
\begin{align*}
& \lim_{\delta \to 0} \sup \bigl\{\tilde\al\kl{\delta, \data^\delta} \mid \data^\delta \in \Y \wedge \norm{\data^\delta - g} \leq \delta \bigr\} = 0 \,,\\
& \lim_{\delta \to 0} \sup \bigl\{\norm{f - \reg_{\tilde\al\kl{\delta, \data^\delta}}\data^\delta} \mid \data^\delta \in \Y \wedge \norm{\data^\delta - g} \leq \delta \bigr\} =0 \,.
\end{align*}
\end{linenomath*}
In this case, function $\tilde\al$ is called admissible parameter choice.
\end{definition}

As an auxiliary result we show  convergence as $\al \to 0$ for exact data.

\begin{proposition}[Pointwise convergence] \label{prop:pointwise} For all $\data \in \ran (\Ko)$ we have $\lim_{\alpha \to 0} \reg^\filt_\alpha \data = \Ko^{-1}  \data$.
\end{proposition}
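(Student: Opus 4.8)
The plan is to exploit the exact reconstruction formula \eqref{eq:ti-recon} together with the filter properties \ref{def:regfilt1}--\ref{def:regfilt3}, and to pass to the limit via dominated convergence on $\ell^2(\La, L^2(\R^d))$. First I would fix $\data \in \ran(\Ko)$, write $\data = \Ko\signal$ for some $\signal \in \dom(\Ko)$, and use \ref{ti3} to rewrite $\Vo^*_\la \data = \qsv_\la\,(u^\ast_\la \ast \signal)$. Then
\begin{linenomath*}
\begin{equation*}
\reg^\filt_\al \data - \Ko^{-1}\data
= \Wo\bigl( (\qsv_\la \filt_\al(\qsv_\la) - 1)\cdot (u^\ast_\la \ast \signal) \bigr)_{\la \in \La},
\end{equation*}
\end{linenomath*}
where I have subtracted \eqref{eq:ti-recon} written as $\Ko^{-1}\data = \Wo\bigl((u^\ast_\la \ast \signal)_{\la \in \La}\bigr)$ (using $\qsv_\la^{-1}\Vo^*_\la\data = u^\ast_\la \ast \signal$). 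Since $\Wo$ is bounded (upper TI-frame bound of $(w_\la)$), it suffices to show that $\bigl((\qsv_\la\filt_\al(\qsv_\la)-1)\cdot(u^\ast_\la\ast\signal)\bigr)_{\la\in\La} \to 0$ in $\ell^2(\La, L^2(\R^d))$ as $\al\to 0$.

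For the convergence of this sequence in $\ell^2(\La,L^2(\R^d))$, I would argue term-by-term-plus-domination. For each fixed $\la$, property \ref{def:regfilt3} gives $\qsv_\la\filt_\al(\qsv_\la) \to 1$ as $\al\to0$ (here $\qsv_\la>0$ by \ref{ti3}), so $\norm{(\qsv_\la\filt_\al(\qsv_\la)-1)(u^\ast_\la\ast\signal)}^2 \to 0$. For the dominating function, property \ref{def:regfilt2} yields $\abs{\qsv_\la\filt_\al(\qsv_\la)-1} \le C+1$ uniformly in $\al$ and $\la$, hence $\norm{(\qsv_\la\filt_\al(\qsv_\la)-1)(u^\ast_\la\ast\signal)}^2 \le (C+1)^2 \norm{u^\ast_\la\ast\signal}^2$, and $\sum_\la \norm{u^\ast_\la\ast\signal}^2 \le B\norm{\signal}^2 < \infty$ by the upper TI-frame inequality \eqref{eq:TI-frame}. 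The discrete dominated convergence theorem then gives $\sum_\la \norm{(\qsv_\la\filt_\al(\qsv_\la)-1)(u^\ast_\la\ast\signal)}^2 \to 0$, and applying the bounded operator $\Wo$ finishes the proof.

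The main subtlety — not really an obstacle, but the point requiring care — is that the limit $\al\to0$ is along a continuum, so strictly speaking one invokes the sequential characterization of limits: for any sequence $\al_n\to0$ the dominated convergence argument applies, and since the bound is sequence-independent the full limit follows. One should also note $\Ko^{-1}\data$ is well defined on $\ran(\Ko)$ since \eqref{eq:ti-recon} presupposes (as the surrounding text does) that $\Ko$ is injective on its domain; no extra work is needed there. Everything else is routine given the TI-frame machinery already developed.
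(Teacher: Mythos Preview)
Your proof is correct and follows essentially the same route as the paper: both write the difference as $\Wo$ applied to $\bigl((\qsv_\la\filt_\al(\qsv_\la)-1)\,u^\ast_\la\ast\signal\bigr)_\la$, use boundedness of $\Wo$, and then apply dominated convergence on the $\ell^2(\La,L^2(\R^d))$-sum with the uniform bound from \ref{def:regfilt2} and pointwise convergence from \ref{def:regfilt3}. Your treatment is in fact slightly more explicit about the dominating sequence and the passage from sequential to continuum limits.
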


\begin{proof}
Let $\data \in \ran (\Ko)$ and $\signal \in  \dom (\Ko)$ with  $\Ko \signal = \data$. From the  reproducing formula \eqref{eq:inv-dual} we have  $\signal = \sum_{\la  \in \La} w_\lambda  \ast u^\ast_\la \ast \signal$ and from the definition of the filtered TI-DFD together with \ref{ti3} we have $\reg^\filt_\al \data =  \sum_{\lambda\in\Lambda} w_\lambda  \ast ( \filt_\al (\qsv_\lambda)  \qsv_\lambda \cdot  u^\ast_\la \ast \signal)$. As a consequence,
\begin{linenomath*}
\begin{equation}\label{eq:pointwiseaux}
\norm{\signal - \reg^\filt_\al \data}^2
\leq
\norm{\Wo}^2 \sum_{\la \in \La} \abs{1 -  \filt_\al \kl{\qsv_\la} \qsv_\la}^2 \norm{u^\ast_\la \ast f}^2
\leq
\norm{\Wo}^2\norm{\Uo^*}^2  \sup_{\la  \in \La}  \abs{1 -  \filt_\al \kl{\qsv_\la} \qsv_\la}^2 \norm{\signal}^2 \,.
\end{equation}
\end{linenomath*}
According to  \ref{def:regfilt3}, $\lim_{\al \to 0} \abs{1 -  \filt_\al \kl{\qsv_\la} \qsv_\la  } =   0$ pointwise and according to  \ref{def:regfilt2},  $\sup_{\la  \in \La}  \abs{1 -  \filt_\al \kl{\qsv_\la} \qsv_\la }$ is bounded independently  of $\alpha$. Application of the dominated convergence theorem to the sum in \eqref{eq:pointwiseaux} yield $\norm{f - \reg^\filt_\al g} \to 0$.
\end{proof}

As a consequence  of Propositions~\ref{prop:well} and \ref{prop:pointwise} we derive the following main regularization  result. For convenience of the reader we restate all assumptions on the operator and the filtered DFD in that theorem.

\begin{theorem}[Filtered TI-DFD is regularization method] \label{thm:convergence}
Let $(u_\la,\Vo^*_\la, \qsv_\la)_{\la \in \La}$ be a TI-DFD for  the closed linear operator $\Ko \colon \dom(\Ko) \subseteq L^2(\R^d)  \to \Y$,  $(w_\la)_{\la\in \La}$ a dual frame of $(u_\la)_{\la\in \La}$, $(\filt_\al)_{\alpha>0}$ a regularizing filter and $(\reg^\filt_\al)_{\alpha>0}$ defined by \eqref{eq:ti-filt}. Let  $\tilde\al \colon (0, \infty) \to (0, \infty)$ satisfy
\begin{linenomath*}
\begin{equation} \label{eq:convergence-rule}
\lim_{\delta \to  0} \tilde\al (\delta) =  \lim_{\delta \to  0}  \delta \norm{\filt_{\tilde\al (\delta)}}_\infty =0\,.
\end{equation}
\end{linenomath*}
Then  $\kl{(\reg^\filt_\al)_{\alpha>0}, \tilde\al}$ is a regularization method for $\Ko \signal = \data$ for any $\data \in \ran (\Ko)$.
\end{theorem}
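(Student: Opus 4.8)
The plan is to combine the two preceding results — the uniform stability bound from Proposition~\ref{prop:well} and the pointwise convergence from Proposition~\ref{prop:pointwise} — via the standard triangle-inequality decomposition of the total error into an approximation error and a propagated data error. Fix $\data \in \ran(\Ko)$, let $\signal \in \dom(\Ko)$ satisfy $\Ko \signal = \data$, and let $\data^\delta \in \Y$ with $\norm{\data^\delta - \data} \le \delta$. Write
\begin{linenomath*}
\begin{equation*}
\norm{\signal - \reg^\filt_{\alpha^\ast(\delta)} \data^\delta}
\le \norm{\signal - \reg^\filt_{\alpha^\ast(\delta)} \data}
 + \norm{\reg^\filt_{\alpha^\ast(\delta)} (\data - \data^\delta)} \,.
\end{equation*}
\end{linenomath*}
The first term is bounded using Proposition~\ref{prop:pointwise}: since $\alpha^\ast(\delta) \to 0$ as $\delta \to 0$ by \eqref{eq:convergence-rule}, we get $\reg^\filt_{\alpha^\ast(\delta)}\data \to \Ko^{-1}\data = \signal$, so this term vanishes. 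The second term is bounded using Proposition~\ref{prop:well}: $\norm{\reg^\filt_{\alpha^\ast(\delta)}(\data - \data^\delta)} \le \norm{\filt_{\alpha^\ast(\delta)}}_\infty \norm{\Wo}\norm{\Vo}\,\delta$, which tends to $0$ because $\delta \norm{\filt_{\alpha^\ast(\delta)}}_\infty \to 0$ by the second condition in \eqref{eq:convergence-rule}. Adding the two estimates shows the second defining property of a regularization method; the first property, $\lim_{\delta\to 0} \alpha^\ast(\delta) = 0$, is immediate from \eqref{eq:convergence-rule}.

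One point that needs a little care is that $\alpha^\ast$ here is a function of $\delta$ alone, whereas the definition of regularization method allows $\alpha^\ast$ to depend on $(\delta, \data^\delta)$ and takes a supremum over all admissible $\data^\delta$. Since the bounds above are uniform in $\data^\delta$ (they depend on $\data^\delta$ only through $\norm{\data - \data^\delta} \le \delta$), taking the supremum over $\{\data^\delta : \norm{\data^\delta - \data} \le \delta\}$ changes nothing, and an a priori parameter choice is in particular an admissible one. Another minor bookkeeping issue: Proposition~\ref{prop:pointwise} is stated as a limit $\alpha \to 0$, so to use it along the specific null sequence $\alpha^\ast(\delta)$ one just invokes that any limit as $\alpha\to 0$ restricts to any sequence $\alpha \to 0$.

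I do not anticipate a genuine obstacle here — this is the classical two-term argument (as in \cite{engl1996regularization}), and all the analytic work has already been done in Propositions~\ref{prop:well} and~\ref{prop:pointwise}. The only thing to state explicitly is that $\signal$ in the error bound should be taken as the $\Ko^{-1}$-solution, i.e. the element of $\dom(\Ko)$ mapped to $\data$; since $\data \in \ran(\Ko)$ this is well defined (and if $\Ko$ fails to be injective one simply fixes any such preimage, consistent with how $\Ko^{-1}$ is used throughout Section~\ref{sec:dfd}). Thus the filtered TI-DFD together with any parameter choice satisfying \eqref{eq:convergence-rule} is a regularization method for $\Ko\signal = \data$.
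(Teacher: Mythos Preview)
Your proof is correct and takes essentially the same approach as the paper: both rely on Propositions~\ref{prop:well} and~\ref{prop:pointwise} to control the data-error and approximation-error terms, respectively. The only difference is cosmetic: the paper invokes \cite[Propositions~3.4, 3.7]{engl1996regularization} as a black box for the triangle-inequality splitting and the resulting sufficient condition $\alpha^\ast(\delta),\ \delta\norm{\reg^\filt_{\alpha^\ast(\delta)}}\to 0$, whereas you write this argument out explicitly.
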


\begin{proof}
According to Propositions \ref{prop:well} and \ref{prop:pointwise}, $\kl{\filt_\al}_{\alpha > 0}$ is a family of bounded linear operators that converges point-wise to $\Ko^{-1}$ on $\dom \kl{\Ko^{-1}}$. Using \cite[Propositions 3.4, 3.7]{engl1996regularization} the pair $\kl{\kl{\reg^\filt_\al}_{\alpha > 0}, \tilde\al}$ is a regularization method if $\tilde\al (\delta), \delta \norm{\reg_{\tilde\al (\delta)}} \to 0$ as $\delta \to 0$. The estimate $\norm{\reg^\filt_\al} \leq  \norm{\filt_\al}_\infty \norm{\Wo} \norm{\Vo^*}  $ derived in Proposition \ref{prop:well} then yields the claim.
\end{proof}

\subsection{Convergence rates}

As the next result we derive convergence rates for filtered TI-DFD, which gives quantitative estimates for  the  reconstruction error $\norm{\signal_\star - \reg_\alpha \data^\delta}$. Due to the ill-posedness of \eqref{eq:ip} such estimates require additional assumptions on the exact unknown for any reconstruction method. In the following we write $\mathbf{h} = (h_\la)_{\la\in \La} \in \ell^2\kl{\Lambda, L^2(\R^d)}$. We  derive convergence  rates under the following assumptions on the exact unknown $\signal_\star \in L^2(\R^d)$ and the regularizing filter $(\filt_\al)_{\alpha>0}$.

\begin{assumption}[Convergence rates conditions]
For $\mu, \rho >0$ suppose:
\begin{enumerate}[itemindent=2em, leftmargin=2em,  label=(R\arabic*), topsep=0em]
\item \label{rate-sc}$\exists \mathbf{h}  \in \ell^2\kl{\Lambda, L^2(\R^d)}\colon  \norm{\mathbf{h}} \leq \rho$  $ \wedge$ $\forall \lambda \in \Lambda \colon u^\ast_\la \ast \signal_\star = \qsv_\lambda^{2\mu} h_\lambda$.
\item \label{rate-f1} $\norm{\filt_\al}_{\infty} = \mathcal{O}\kl{\alpha^{-1/2}}$ as $\alpha \to 0$.
\item \label{rate-f2} $\forall \alpha > 0 \colon \sup\{\qsv^{2\mu} \abs{1-\qsv \filt_\al \kl{\qsv}} \mid \qsv \in (0, \infty)\} \leq C_\mu \alpha^\mu $.
\end{enumerate}
\end{assumption}

Source condition \ref{rate-sc} is an abstract smoothness condition relating the element to be reconstructed with the ill-posedness of the operator characterized by the quasi-singular values. Conditions  \ref{rate-f1}-\ref{rate-f2} restrict the class of filters yielding  a desired rate. For example,  it is well known that  the hard truncation filter satisfies these conditions for all $\mu>0$.

We have the following result.

\begin{theorem}[Convergence rates]\label{thm:rates}
Suppose  $\signal_\star$ and $(\filt_\al)_{\alpha>0}$ satisfy \ref{rate-sc}-\ref{rate-f2}  and make the parameter choice  $\alpha = \tilde\al \kl{\delta, \data^\delta} \asymp \kl{\delta / \rho}^{2/(2\mu+1)}$. Then there exists a constant $c_\mu$  independent of $\signal_\star, \rho$  such that for all $\data^\delta \in \Y$ with $\norm{\data^\delta - \Ko \signal_\star} \leq \delta$,
\begin{linenomath*}
\begin{equation} \label{eq:rates}
	 \norm{\signal_\star - \reg_{\tilde\al}^\filt (\data^\delta)}
	\leq c_\mu \delta^{\frac{2\mu}{2\mu+1}}\rho^{\frac{1}{2\mu+1}} \,.
\end{equation}
\end{linenomath*}
\end{theorem}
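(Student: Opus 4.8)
The plan is to split the reconstruction error into an approximation (bias) term and a data-propagation (noise) term, bound each separately, and then optimize over $\alpha$. Concretely, I would write
\begin{linenomath*}
\begin{equation*}
	\norm{\signal_\star - \reg^\filt_{\alpha}(\data^\delta)}
	\leq \norm{\signal_\star - \reg^\filt_{\alpha}(\Ko\signal_\star)}
	+ \norm{\reg^\filt_{\alpha}(\Ko\signal_\star) - \reg^\filt_{\alpha}(\data^\delta)} \,,
\end{equation*}
\end{linenomath*}
using linearity of $\reg^\filt_\alpha$ (Proposition~\ref{prop:well}). The second term is immediately controlled by the operator norm bound from Proposition~\ref{prop:well} together with \ref{rate-f1}: $\norm{\reg^\filt_{\alpha}(\Ko\signal_\star - \data^\delta)} \leq \norm{\filt_\alpha}_\infty \norm{\Wo}\norm{\Vo}\,\delta = \mathcal{O}(\alpha^{-1/2}\delta)$.

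For the bias term, I would proceed exactly as in the proof of Proposition~\ref{prop:pointwise}: writing $\data = \Ko\signal_\star$ and using \ref{ti3} gives $\reg^\filt_\alpha(\Ko\signal_\star) = \Wo\bigl((\filt_\alpha(\qsv_\la)\qsv_\la\cdot u^\ast_\la\ast\signal_\star)_{\la\in\La}\bigr)$, while the reproducing formula \eqref{eq:inv-dual} gives $\signal_\star = \Wo\bigl((u^\ast_\la\ast\signal_\star)_{\la\in\La}\bigr)$. Hence
\begin{linenomath*}
\begin{equation*}
	\norm{\signal_\star - \reg^\filt_\alpha(\Ko\signal_\star)}^2
	\leq \norm{\Wo}^2 \sum_{\la\in\La} \abs{1-\qsv_\la\filt_\alpha(\qsv_\la)}^2 \norm{u^\ast_\la\ast\signal_\star}^2 \,.
\end{equation*}
\end{linenomath*}
Now I substitute the source condition \ref{rate-sc}, $u^\ast_\la\ast\signal_\star = \qsv_\la^{2\mu}h_\la$, so the summand becomes $\abs{1-\qsv_\la\filt_\alpha(\qsv_\la)}^2\qsv_\la^{4\mu}\norm{h_\la}^2 = \bigl(\qsv_\la^{2\mu}\abs{1-\qsv_\la\filt_\alpha(\qsv_\la)}\bigr)^2\norm{h_\la}^2$, and \ref{rate-f2} bounds the bracket uniformly in $\la$ by $C_\mu\alpha^\mu$. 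Pulling this out of the sum and using $\sum_\la\norm{h_\la}^2 = \norm{\mathbf{h}}^2\leq\rho^2$ yields $\norm{\signal_\star - \reg^\filt_\alpha(\Ko\signal_\star)} \leq \norm{\Wo}\,C_\mu\,\alpha^\mu\rho$.

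Combining the two bounds gives $\norm{\signal_\star - \reg^\filt_\alpha(\data^\delta)} \lesssim \alpha^\mu\rho + \alpha^{-1/2}\delta$ (with constants depending only on $\mu$, $\norm{\Wo}$, $\norm{\Vo}$ and the $\mathcal{O}$-constant in \ref{rate-f1}). Balancing the two terms, $\alpha^\mu\rho \asymp \alpha^{-1/2}\delta$, gives $\alpha \asymp (\delta/\rho)^{2/(2\mu+1)}$, and plugging this choice back in produces both terms of order $\delta^{2\mu/(2\mu+1)}\rho^{1/(2\mu+1)}$, which is the claimed rate \eqref{eq:rates}. The argument is essentially a transcription of the classical filtered-SVD convergence-rate proof, with the TI-analysis operator $\Uo^*$ and the dual-synthesis operator $\Wo$ playing the roles of the orthonormal singular systems; the only mild subtlety is bookkeeping the frame constants $\norm{\Wo}$, $\norm{\Vo}$ and checking that the source condition interacts cleanly with the $\ell^2(\La,L^2(\R^d))$-structure — but since \ref{rate-sc} is stated directly in terms of the $u^\ast_\la\ast\signal_\star$, no extra work is needed there. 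I do not anticipate a genuine obstacle; the main care-point is simply making the constant $c_\mu$ explicitly independent of $\signal_\star$ and $\rho$, which the above factorization already makes transparent.
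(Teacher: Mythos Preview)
Your proposal is correct and follows essentially the same route as the paper's own proof: split via the triangle inequality into a data-propagation term bounded by Proposition~\ref{prop:well} and \ref{rate-f1}, and a bias term handled via the reproducing formula, \ref{ti3}, the source condition \ref{rate-sc}, and \ref{rate-f2}, then balance with $\alpha \asymp (\delta/\rho)^{2/(2\mu+1)}$. The only cosmetic difference is the order in which the two terms are treated.
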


\begin{proof}
From \ref{rate-sc}, \ref{rate-f1} and  the estimate  $\norm{\data^\delta - \Ko \signal_\star} \leq \delta$ we obtain
\begin{linenomath*}
\begin{align*}
\norm{\reg^\filt_\al \data^\delta - \signal_\star}
& \leq \norm{\reg^\filt_\al\kl{\data^\delta - \Ko \signal_\star}} + \norm{\reg^\filt_\al\Ko \signal_\star - \signal_\star}
\\
& \leq \norm{\reg^\filt_\al} \delta + \norm{\sum_{\lambda\in \Lambda} w_\lambda \ast ( (1-\filt_\al\kl{\qsv_\la}\qsv_\la) \cdot u^\ast_\la \ast \signal_\star )}
\\
& \leq  \norm{\filt_\al}_\infty \norm{\Wo} \norm{\Vo} \delta + \norm{\Wo} \bigl(\sum_{\la  \in \La} \abs{1-\filt_\al\kl{\qsv_\la}\qsv_\la}^2 \norm{u_\la^* \ast \signal_\star}^2\bigr)^{1/2}
\\
& \leq \norm{\filt_\al}_\infty \norm{\Wo} \norm{\Vo} \delta + \norm{\Wo} \, \bigl(\sum_{\la  \in \La} \abs{(1-\filt_\al\kl{\qsv_\la}\qsv_\la)\qsv_\lambda^{2\mu}}^2 \, \norm{h_\lambda}^2 \bigr)^{1/2}
\\
& \leq c_1 \alpha^{-1/2} \delta + C_\mu \norm{\Wo}  \alpha^{\mu} \rho \,.
\end{align*}
\end{linenomath*}
With the  parameter choice $\alpha = \tilde\al \asymp \kl{\delta/\rho}^{2/(2\mu+1)}$ this yields  \eqref{eq:rates}.
\end{proof}

\subsection{Order optimality}

We next proof that the convergence rates obtained in Theorem~\ref{thm:rates} are optimal. A discussion of order optimality of regularization methods can for example  be found in \cite[Section 3.2]{engl1996regularization}. While the  methods we use are follow  the  spirit of \cite{engl1996regularization}  our results are different as convergence rates  and order optimality depends on the source specific set where these properties are studied. For $\mathcal{M} \subseteq \dom (\Ko)$ and $\reg \colon \Y \to L^2(\R^d)$ define
\begin{linenomath*}
\begin{align} \label{eq:moduls}
E(\mathcal{M}, \delta, \reg) &\coloneqq \sup \{\norm{\reg (\data^\delta) - \signal} \mid f\in \mathcal{M} \wedge \data^\delta \in \Y \wedge \norm{\Ko \signal - \data^\delta} \leq \delta\} \\ \label{eq:worst}
\epsilon (\mathcal{M}, \delta) &\coloneqq \sup \{\norm{\signal} \mid \signal \in \mathcal{M} \wedge \norm{\Ko f} \leq \delta\} 	\,.
\end{align}
\end{linenomath*}
The quantity  $E(\mathcal{M}, \delta, \reg)$  is the worst case reconstruction error  using the reconstruction method  $\reg$ under the  a-priori assumption  $\signal  \in \mathcal{M}$ and the error bound $\norm{\reg \data^\delta - \signal} \leq \delta$. If $\reg(0)=0$ one readily verifies that $ E(\mathcal{M}, \delta, \reg) \geq \epsilon(\mathcal{M}, \delta)$. A family  $\kl{\reg^\delta}_{\delta > 0}$ is called order optimal on $\mathcal{M}$, if   $E(\mathcal{M}, \delta, \reg^\delta) \leq c\cdot  \epsilon \kl{\mathcal{M}, \delta}$ for some $c>0$ and sufficiently small $\delta$.

Theorem \ref{thm:rates} gives an upper bound for the worst case error of filtered TI-DFD on
\begin{linenomath*}
\begin{equation}\label{eq:sset}
	\mathcal{M}_{\mu,\rho}
	\coloneqq \{ \signal \in \dom(\Ko) \mid  \exists \mathbf{h}  \colon \norm{\mathbf{h}} \leq \rho \wedge \forall \la \colon u^\ast_\la \ast \signal = \qsv_\la^{2\mu} h_\la \} \,.
\end{equation}
\end{linenomath*}
In order to show that filtered TI-DFD is order optimal on $\mathcal{M}_{\mu,\rho}$, we will bound $\epsilon (\mathcal{M}_{\mu,\rho}, \delta)$ from below. Such an estimate is derived in the following Theorem~\ref{eq:optimality}, at least  for a sequence of noise levels tending to zero. Note that $\epsilon (\mathcal{M}_{\mu,\rho}, \delta)$ is  monotonically increasing in $\delta$ and therefore we have  an error bound for any sufficiently small $\delta$.

\begin{theorem}[Order optimality of filtered TI-DFD]\label{eq:optimality}
Let $(u_\la,\Vo^*_\la, \qsv_\la)_{\la \in \La}$ be a TI-DFD for $\Ko$ such that $0$ is an accumulation point  of $(\qsv_\la)_{\la >0}$ and assume there exists    $(e_\la)_{\la\in \La}   \in L^2(\R^d)^\La$   with $u_\la^* \ast   e_{\la'} =0 $ for $\la \neq \la'$ and $\norm{u_\la^* \ast   e_\la}  = 1$.  Then, for some sequence $(\delta_n)_{\in \N}$ with $\delta_n \to 0$,
\begin{linenomath*}
\begin{equation} \label{eq:converse}
\epsilon \kl{\mathcal{M}_{\mu,\rho}, \delta_n} \geq
 \norm{\Uo^*}^{-1}  \norm{(\Vo^*)^\ddag}^{-2\mu/(2\mu+1)} \cdot
\delta_n^{\frac{2\mu}{2\mu+1}} \rho^{\frac{1}{2\mu+1}} \,.
\end{equation}
\end{linenomath*}
\end{theorem}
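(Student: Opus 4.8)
The plan is to construct, for each fixed quasi-singular value that is small, an explicit test element lying in $\mathcal{M}_{\mu,\rho}$ whose norm realizes (up to constants) the claimed lower bound, and whose image under $\Ko$ has norm at most the corresponding noise level $\delta_n$. Concretely, since $0$ is an accumulation point of $(\qsv_\la)_{\la \in \La}$, pick a sequence of indices $\la_n$ with $\qsv_{\la_n} \to 0$; using the elements $e_{\la_n}$ with the stated orthogonality-type property $u_\la^\ast \ast e_{\la'} = 0$ for $\la \neq \la'$ and $\norm{u_{\la_n}^\ast \ast e_{\la_n}} = 1$, I would set $\signal_n \coloneqq t_n \, e_{\la_n}$ for a scalar $t_n > 0$ to be chosen. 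Then $u_\la^\ast \ast \signal_n = 0$ unless $\la = \la_n$, so the source condition in \eqref{eq:sset} is satisfied by taking $\mathbf{h} = (h_\la)$ with $h_{\la_n} = t_n \qsv_{\la_n}^{-2\mu} (u_{\la_n}^\ast \ast e_{\la_n})$ and $h_\la = 0$ otherwise; this gives $\norm{\mathbf{h}} = t_n \qsv_{\la_n}^{-2\mu}$, so I need $t_n \qsv_{\la_n}^{-2\mu} \leq \rho$, i.e. $t_n \leq \rho\, \qsv_{\la_n}^{2\mu}$.

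Next I would estimate the two relevant norms for $\signal_n$. For the lower bound on $\norm{\signal_n}$, use item \ref{prop-ti5} of Proposition~\ref{prop:ti}: $\signal_n = (\Uo^\ast)^\ddag \Uo^\ast \signal_n$, hence $\norm{\signal_n} \geq \norm{(\Uo^\ast)^\ddag}^{-1} \norm{\Uo^\ast \signal_n}_\Lambda$; but $\Uo^\ast \signal_n$ has only the $\la_n$-component nonzero, of norm $t_n$, so... wait — here I actually want $\norm{\Uo^\ast \signal_n}_\Lambda$ bounded \emph{below}, which is exactly the left TI-frame inequality, giving instead $\norm{\signal_n} \geq A^{1/2}\norm{\cdots}$; the cleaner route is the upper bound $\norm{\Uo^\ast \signal_n}_\Lambda \leq \norm{\Uo^\ast}\norm{\signal_n}$ which reads $t_n \leq \norm{\Uo^\ast}\norm{\signal_n}$, hence $\norm{\signal_n} \geq \norm{\Uo^\ast}^{-1} t_n$. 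For the constraint $\norm{\Ko \signal_n} \leq \delta$, apply \ref{ti3}: $\Vo^\ast_{\la_n}(\Ko \signal_n) = \qsv_{\la_n}(u_{\la_n}^\ast \ast \signal_n) = \qsv_{\la_n} t_n (u_{\la_n}^\ast \ast e_{\la_n})$, while $\Vo^\ast_\la(\Ko \signal_n) = \qsv_\la (u_\la^\ast \ast \signal_n) = 0$ for $\la \neq \la_n$; therefore $\norm{\Vo^\ast(\Ko \signal_n)}_\Lambda = \qsv_{\la_n} t_n$. Since $\Vo^\ast$ is bounded below on $\overline{\ran\Ko}$ by \ref{ti2}, with $\norm{(\Vo^\ast)^\ddag}^{-1}\norm{\Ko \signal_n} \leq \norm{\Vo^\ast(\Ko\signal_n)}_\Lambda = \qsv_{\la_n} t_n$, we get $\norm{\Ko \signal_n} \leq \norm{(\Vo^\ast)^\ddag}\, \qsv_{\la_n} t_n$.

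Then I would balance the free parameters. Set $\delta_n \coloneqq \norm{(\Vo^\ast)^\ddag}\,\qsv_{\la_n} t_n$ (so the noise constraint holds with equality) and choose $t_n$ as large as the source constraint permits, $t_n = \rho\,\qsv_{\la_n}^{2\mu}$. Then $\delta_n = \norm{(\Vo^\ast)^\ddag}\,\rho\,\qsv_{\la_n}^{2\mu+1} \to 0$ since $\qsv_{\la_n}\to 0$, and $\signal_n \in \mathcal{M}_{\mu,\rho}$ with $\norm{\Ko \signal_n}\leq\delta_n$, so $\epsilon(\mathcal{M}_{\mu,\rho},\delta_n) \geq \norm{\signal_n} \geq \norm{\Uo^\ast}^{-1} t_n = \norm{\Uo^\ast}^{-1}\rho\,\qsv_{\la_n}^{2\mu}$. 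It remains to express $\qsv_{\la_n}^{2\mu}$ in terms of $\delta_n$ and $\rho$: from $\delta_n = \norm{(\Vo^\ast)^\ddag}\,\rho\,\qsv_{\la_n}^{2\mu+1}$ we get $\qsv_{\la_n} = (\delta_n / (\norm{(\Vo^\ast)^\ddag}\rho))^{1/(2\mu+1)}$, hence $\qsv_{\la_n}^{2\mu} = \norm{(\Vo^\ast)^\ddag}^{-2\mu/(2\mu+1)} \delta_n^{2\mu/(2\mu+1)} \rho^{-2\mu/(2\mu+1)}$, and substituting gives $\epsilon(\mathcal{M}_{\mu,\rho},\delta_n) \geq \norm{\Uo^\ast}^{-1}\norm{(\Vo^\ast)^\ddag}^{-2\mu/(2\mu+1)} \delta_n^{2\mu/(2\mu+1)} \rho^{1-2\mu/(2\mu+1)}$, and $1 - 2\mu/(2\mu+1) = 1/(2\mu+1)$, which is exactly \eqref{eq:converse}.

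The main obstacle I anticipate is bookkeeping rather than conceptual: making sure $\signal_n$ genuinely lies in $\dom(\Ko)$ (it does, since $e_{\la_n} \in L^2(\R^d)$ and the source condition with $\mathbf{h}\in\ell^2$ forces membership via the reconstruction formula, but this should be stated carefully), and correctly using the bounded-below property of $\Vo^\ast$ only on $\overline{\ran\Ko}$ — one must check $\Ko\signal_n \in \ran\Ko \subseteq \overline{\ran\Ko}$ before invoking \ref{ti2}. A secondary subtlety is that the argument only produces the bound along the subsequence $(\delta_n)$; the remark preceding the theorem already handles the extension to all small $\delta$ via monotonicity of $\epsilon(\mathcal{M}_{\mu,\rho},\cdot)$, so no further work is needed there.
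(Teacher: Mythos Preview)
Your proposal is correct and follows essentially the same argument as the paper: the paper also takes $\signal^{(n)} = \rho\,\qsv_{\la_n}^{2\mu} e_{\la_n}$ (your $t_n$ chosen at its extremal value from the outset), verifies membership in $\mathcal{M}_{\mu,\rho}$ via the orthogonality of the $e_\la$, bounds $\norm{\Ko\signal^{(n)}}$ above using $\norm{(\Vo^\ast)^\ddag}$ and $\norm{\signal^{(n)}}$ below using $\norm{\Uo^\ast}^{-1}$, and then sets $\delta_n = \norm{(\Vo^\ast)^\ddag}\,\rho\,\qsv_{\la_n}^{2\mu+1}$ to conclude. Your self-correction on the direction of the frame inequality for $\norm{\signal_n}$ lands on exactly the estimate the paper uses, and the subtleties you flag (domain membership, $\Vo^\ast$ bounded below only on $\overline{\ran\Ko}$) are not addressed explicitly in the paper either.
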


\begin{proof}
After extracting  a subsequence we can assume $\Lambda = \N$ and that $(\qsv_\la)_{\la \in \N}$ converges to zero. For any  $n \in \N$ define $\signal^{(n)} \coloneqq \rho \qsv_n^{2\mu} e_n$.  Then  $u_\la^* \ast \signal^{(n)} =  \qsv_\la^{2\mu} h^{(n)}_\la$ where $h^{(n)}_\la = 0$ for $\la \neq n$ and $h^{(n)}_n = \rho \cdot (u_n^* \ast  e_n)$ with  $\norm{\mathbf{h}^{(n)}} =\norm{h^{(n)}_\la} = \rho$.   In particular,  $\signal^{(n)} \in \mathcal{M}_{\mu,\rho}$. Moreover,
\begin{linenomath*}
\begin{align*}
\norm{\Ko \signal^{(n)}} &\leq  \norm{(\Vo^*)^\ddag} \bigl(\sum_{\la \in \La} \norm{\Vo^*_\la \Ko \signal^{(n)}}^2 \bigr)^{1/2} = \norm{(\Vo^*)^\ddag} \bigl(\sum_{\la \in \La} \qsv_\la^2  \norm{u_\la^* \ast \signal^{(n)}}^2\bigr)^{1/2}
= \norm{(\Vo^*)^\ddag}  \qsv_n^{2\mu+1}  \rho \,,
\\
\norm{\signal^{(n)}}  &\geq  \norm{\Uo^*}^{-1}  \bigl(\sum_{\la \in \La} \norm{u_\la^* \ast \signal^{(n)}}^2 \bigr)^{1/2} = \norm{\Uo^*}^{-1}  \qsv_n^{2\mu}  \rho \,.
\end{align*}
\end{linenomath*}
With $\delta_n \coloneqq \norm{(\Vo^*)^\ddag}  \qsv_n^{2\mu+1}  \rho$, the above estimates imply   $\norm{\Ko \signal^{(n)}} \leq  \delta_n$ and
\begin{linenomath*}
\begin{equation*}
	\norm{\signal^{(n)}} \geq  \norm{\Uo^*}^{-1}     (   \delta_n  \norm{(\Vo^*)^\ddag}^{-1} \rho^{-1})^{2\mu/(2\mu+1)}   \rho
= \norm{\Uo^*}^{-1}\norm{(\Vo^*)^\ddag}^{-2\mu/(2\mu+1)}    \delta_n^{2\mu/(2\mu+1)} \rho^{1/(2\mu+1)}  \,.
\end{equation*}
\end{linenomath*}
Because we have $\signal^{(n)} \in \mathcal{M}_{\mu,\rho}$ and $\epsilon \kl{\mathcal{M}_{\mu,\rho}, \delta_n} \geq \norm{f^{(n)}} $ which yields \eqref{eq:converse}.
\end{proof}

From Theorem  \ref{eq:optimality} and the monotonicity of $\epsilon \kl{\mathcal{M}, \cdot}$  one gets  $ \epsilon \kl{\mathcal{M}, \delta}  \geq c_1 \sum_n \chi_{(\delta_{n+1}, \delta_n]}(\delta) \cdot \delta_n^{2\mu/(2\mu+1)} $ for some for some $c_1>0$ and $\delta_n  \asymp \qsv_n^{2\mu+1}$ after  ordering the quasi-singular values $\qsv_n$ in a descending order. If $ \qsv_n / \qsv_{n+1}$ remains bounded, this shows $ \epsilon \kl{\mathcal{M}, \delta}  \geq c_2  \delta^{2\mu/(2\mu+1)} $ for some $c_2>0$. Together with Theorem \ref{thm:rates} this shows $E(\mathcal{M}, \delta_n, \reg_{\tilde \al}  ) \leq c_3 \cdot  \epsilon \kl{\mathcal{M}, \delta_n}$  for some $c_3>0$ for the filtered TI-DFD $  \reg_{\tilde \al}$ with a-priori parameter choice which in this case yields an order optimal regularization method.

\subsection{Examples for filtered DFDs}
\label{ssec:examples}

We conclude this section  by giving two representative examples for regularizing filters and corresponding filtered TI-DFD, namely  truncated TI-DFD and  Tikhonov-filtered TI-DFD.  In particular we show that the convergence rates conditions are satisfied for all  $\mu >0$ in case of truncated TI-DFD and for $\mu \leq 1$ in case of Tikhonov-filtered TI-DFD.  Concrete examples of TI-DFDs for the 1D integration operator are discussed  in the following section.

\begin{example}[Truncated TI-DFD] \label{ex:trunc}
For  $\al >0$ consider  the truncation filter
 $\filt^{(1)}_\al$ defined by $\filt^{(1)}_\al(\qsv)   \coloneqq \qsv^{-1} \chi_{[\al^{1/2}, \infty)}(\qsv)$. Clearly, conditions  \ref{def:regfilt1}-\ref{def:regfilt3} are satisfied with $C=1$ and $(\filt^{(1)}_\al)_{\al > 0}$ is a regularizing filter.  Furthermore,  $\sup\{\qsv^{2\mu} \abs{1-\qsv \filt^{(1)}_\al(\qsv)} \mid \qsv  >0\} = \sup\{\qsv^{2\mu} \abs{1-\qsv \filt^{(1)}_\al(\qsv)} \mid \qsv^2 < \al \} = \al^\mu$ for $\al, \mu > 0$.  Hence   \ref{rate-f1}, \ref{rate-f2} are satisfied.  The corresponding  truncated TI-DFD becomes
\begin{linenomath*}
\begin{equation} \label{eq:TDFD}
	\reg^{(1)}_\al(y) \coloneqq \sum_{\qsv_\la^2 \geq \al} w_\la \ast  ( \qsv_\la^{-1} \cdot  (\Vo^*_\la g) )  	 \,.
\end{equation}
\end{linenomath*}
The considerations above allow application of  Theorem~\ref{thm:convergence} yielding convergence,  and Theorem~\ref{thm:rates} providing  the convergence rate $ \norm{\signal_\star - \reg^{(1)}_{\tilde \al} \data^\delta}  = \mathcal{O}  (\delta^{2\mu/(2\mu+1)}  \rho^{1/(2\mu+1)})$ under~\ref{rate-sc}.
 \end{example}

Next we consider the Tikhonov filter that already appeared in the introduction in the context  of classical Tikhonov regularization expressed in terms of the SVD.

\begin{example}[Tikhonov-filtered TI-DFD] \label{ex:tik}
For $\al >0$ consider the Tikhonov filter $\filt^{(2)}_\al ( \qsv ) \coloneqq \qsv/(\qsv^2+\al)$. Then   $ \lvert \qsv  \filt^{(2)}_\al(\qsv )  \rvert = \lvert \qsv^2/(\qsv^2 + \al) \rvert   \leq 1$ and  $\lim_{\al \rightarrow 0} \filt^{(2)}_\al (\qsv) = 1/\qsv$. Further, $ \filt^{(2)}_\al$     is bounded, takes its maximum at $\qsv^2 = \al$ and  $\norm{\filt^{(2)}_\al}_\infty =  \al^{-1/2}/2$. Hence  conditions\ref{def:regfilt1}-\ref{def:regfilt3} are satisfied and $(\filt^{(2)}_\al)_{\al >0}$ is a regularizing filter.
Moreover, one shows that   the convergence rates conditions for  Theorem \ref{thm:rates} are satisfied for $\mu \in (0,1]$. However  \ref{rate-f2} is not satisfied for  $\mu>1$, which means that  the Tikhonov filter has qualification  $\mu = 1$ (see the discussion on \cite[p.  76]{engl1996regularization}).   Above considerations  show that  Tikhonov-filtered TI-DFD
\begin{linenomath*}
\begin{equation} \label{eq:zikDFD}
	\reg^{(2)}_\al( y ) \coloneqq \sum_{\la \in \La}
	 w_\la^* \ast  \Bigl(  \frac{\qsv_\la}{\qsv_\la^2+ \al} \cdot  (\Vo^*_\la g) \Bigr)
\end{equation}
\end{linenomath*}
together with a parameter choice satisfying $\delta^2/ {\tilde \al(\delta)} \to 0 $ yields a  regularization method. Moreover, for elements $\signal_\star$ satisfying the source condition \ref{rate-sc}  and parameter choice $\tilde\alpha  \asymp \delta$ the convergence rate $ \norm{\signal_\star - \reg^{(2)}_{\tilde \al} \data^\delta}  = \mathcal{O}  (\delta^{2\mu/(2\mu+1)}  \rho^{1/(2\mu+1)})$ holds.
\end{example}

Further examples of filtered TI-DFDs can be  constructed via known regularizing filters used  in standard SVD-based regularization methods. This includes filters associated to iterative Tikhonov regularization, the Landweber iteration or asymptotic regularization; see  \cite{engl1996regularization}. Note however, that the  combination of these filters with the TI-DFD results in regularization methods that are different from the classical counterparts. For example, $\reg^{(2)}_\al$ is different from Tikhonov regularization except for the very specific case that  $\Uo^*$ and $\Vo^*$ are unitary.

\section{Application: stable differentiation}
\label{sec:example}

In this section we apply the concept of filtered TI-DFD to stable differentiation, the inverse problem associated to 1D integration. In particular, we use TI-wavelets as underlying TI-frame.  While related approaches  are known for denoising \cite{coifman1995translation,nason1995stationary},  we are not aware of  such methods in the context of general inverse problems.  We consider the one-dimensional integration operator as an unbounded operator on  $L^2(\R)$. While the integration operator would be bounded on a bounded domain, working on $\R$ allows to use the concept of TI wavelets and moreover preserves  the translation  invariance of integration.

\subsection{Integration operator on $L^2(\R)$}

Let  $C_\diamond(\R)$ denote the space of all continuous  functions   with compact support and zero integral $\int_\R f =0$. For  functions $\signal \in C_\diamond(\R)$ define the primitive $\Io_\diamond  f  \colon \R \to \C$ by
\begin{linenomath*}
\begin{equation}\label{eq:def K}
	\forall  x \in  \R \colon \quad (\Io_\diamond  f) (x) \coloneqq  \int_{-\infty}^x f(t) \diff t \,.
\end{equation}
\end{linenomath*}
Note that for all $\signal \in C_c(\R)$, the space of all continuous functions with compact support, the primitive $x \mapsto \int_{-\infty}^x f(t) \diff t$ becomes  constant for sufficiently  large $x$. Therefore, the additional assumption of zero integral is necessary and sufficient for the  primitive  being square integrable.

\begin{lemma}[Integration operator on $C_\diamond(\R)$] \label{lem:I}
Operator  $\Io_\diamond \colon C_\diamond(\R) \subseteq L^2(\R) \to L^2(\R) \colon f \mapsto \Io_\diamond f$ is well defined,  linear, densely defined and unbounded.  Moreover, for all $f \in C_\diamond(\R)$, functions $\Fo f$  and $\Fo \Io_\diamond  f$ are continuous with $(\Fo\Io_\diamond  f)(\omega) = (i\omega)^{-1} (\Fo f)(\omega)$.
\end{lemma}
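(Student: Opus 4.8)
The plan is to check in turn the four elementary assertions about $\Io_\diamond$ and then to derive the Fourier identity by a single integration by parts. Well-definedness and linearity come essentially for free: if $f \in C_\diamond(\R)$ has $\supp f \subseteq [-R,R]$, then $(\Io_\diamond f)(x) = 0$ for $x \leq -R$ (empty integral) and $(\Io_\diamond f)(x) = \int_\R f = 0$ for $x \geq R$ (the zero-integral assumption), while on $[-R,R]$ the function $\Io_\diamond f$ is continuous as a primitive of a continuous function. Hence $\Io_\diamond f$ is continuous, compactly supported and bounded, so $\Io_\diamond f \in L^2(\R)$; in fact $\Io_\diamond f \in C^1(\R) \cap C_c(\R)$, which I will reuse below. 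Linearity is immediate from linearity of the integral.

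For the density of $\dom(\Io_\diamond) = C_\diamond(\R)$ in $L^2(\R)$ I would start from the density of $C_c(\R)$ in $L^2(\R)$ and show that every $g \in C_c(\R)$ can be approximated from within $C_\diamond(\R)$. Fixing one $\phi \in C_c(\R)$ with $\int_\R \phi = 1$ and putting $\phi_n(x) \coloneqq n^{-1}\phi(x/n)$, a change of variables gives $\int_\R \phi_n = 1$ and $\norm{\phi_n}^2 = n^{-1}\norm{\phi}^2 \to 0$. Then $g - (\int_\R g)\,\phi_n \in C_\diamond(\R)$ and $\norm{g - (g - (\int_\R g)\phi_n)} = \abs{\int_\R g}\,\norm{\phi_n} \to 0$, so $C_\diamond(\R)$ is dense in $C_c(\R)$, hence in $L^2(\R)$.

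Unboundedness I would establish by a dilation argument, which displays the usual ill-posedness of differentiation. Pick any $f_0 \in C_\diamond(\R) \setminus \{0\}$; note $\Io_\diamond f_0 \neq 0$, since otherwise $f_0 = (\Io_\diamond f_0)' = 0$. For $a > 0$ set $f_a(x) \coloneqq f_0(ax)$, which again lies in $C_\diamond(\R)$. Changes of variables give $\norm{f_a}^2 = a^{-1}\norm{f_0}^2$ and $\Io_\diamond f_a = a^{-1}\,(\Io_\diamond f_0)(a\,\cdot\,)$, hence $\norm{\Io_\diamond f_a}^2 = a^{-3}\norm{\Io_\diamond f_0}^2$, so that $\norm{\Io_\diamond f_a}/\norm{f_a} = a^{-1}\norm{\Io_\diamond f_0}/\norm{f_0} \to \infty$ as $a \to 0^+$. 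Thus $\Io_\diamond$ admits no bound on $C_\diamond(\R)$.

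For the last claim, since $f \in L^1(\R) \cap L^2(\R)$ and $\Io_\diamond f \in C_c(\R) \subseteq L^1(\R)$, both $\Fo f$ and $\Fo \Io_\diamond f$ are continuous (dominated convergence in the defining integral). Because $f$ is continuous, the fundamental theorem of calculus yields $(\Io_\diamond f)' = f$ with $\Io_\diamond f$ of class $C^1$ and of compact support, so integrating by parts in $(\Fo f)(\omega) = \int_\R (\Io_\diamond f)'(x)\,e^{-i\omega x}\,\diff x$ kills the boundary terms and leaves $(\Fo f)(\omega) = i\omega\,(\Fo\Io_\diamond f)(\omega)$; dividing by $i\omega$ gives the identity for $\omega \neq 0$, and at $\omega = 0$ it follows by continuity of both sides together with $(\Fo f)(0) = \int_\R f = 0$. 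I do not foresee a real obstacle here; the only delicate points are choosing the dilation family cleanly for the unboundedness step and invoking the compact support of $\Io_\diamond f$ to legitimise the integration by parts.
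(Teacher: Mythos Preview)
Your proof is correct and follows the same overall skeleton as the paper: verify well-definedness via the compact support of $\Io_\diamond f$, reduce density to approximating $C_c(\R)$ by $C_\diamond(\R)$, and derive the Fourier identity by integration by parts using that $\Io_\diamond f \in C_c^1(\R)$. Two sub-steps differ in execution. For density, the paper takes $g$ supported in $[-1,0]$ and appends a stretched, scaled mirror copy $g(-x/n)/n$ on $[0,n]$ to cancel the integral, whereas you subtract a fixed bump $\phi$ dilated to have small $L^2$-norm; both are one-line constructions. For unboundedness, the paper first establishes the Fourier relation and then reads off unboundedness from the unbounded multiplier $(i\omega)^{-1}$, while your dilation argument $\norm{\Io_\diamond f_a}/\norm{f_a} = a^{-1}\norm{\Io_\diamond f_0}/\norm{f_0}$ is more self-contained and does not require the Fourier step to come first. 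Your treatment of $\omega=0$ via continuity and $(\Fo f)(0)=\int_\R f=0$ is a small refinement the paper leaves implicit.
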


 \begin{proof}
 As noted  above, $\Io_\diamond  f \in L^2(\R)$ for any $f \in C_\diamond(\R)$ and therefore $\Io_\diamond$ is well defined and clearly linear. In order to show that  $\Io_\diamond$ is densely defined  it is sufficient to show that $C_\diamond(\R)$  is dense  with respect to the $L^2$-norm in $C_c(\R)$. For that purpose, let  $g \in C_c(\R)$ and assume without loss of generality that  $\supp (g) \subseteq [-1,0]$. For any $n \in \N$ define    $ g_n(x)  = g(x)$  for $x < 0$, $ g_n(x)  = - g(-x/n)/n$ for $x \in [0,n]$ and
$g_n(x) =  0$ otherwise. Then  $g_n \in C_\diamond(\R)$ and $\norm{g-g_n}^2 \leq \norm{g}^2/n \to 0$.  Hence $C_\diamond(\R)$ is dense in $C_c(\R)$. Furthermore $\signal, \Io_\diamond  \signal$ are integrable and therefore $\Fo f, \Fo\Io_\diamond  $ are continuous  functions defined by the standard Fourier integral. Integration by  parts shows  $\Fo \Io_\diamond \signal (\om) = \int_\R e^{-ix \omega}  \Io_\diamond f(x) \diff x = (i\omega)^{-1} \int_\R e^{-ix\omega}  f(x) \diff x = (i\omega)^{-1} \Fo \signal (\om)$.  Because $\Fo$ is an isomorphism,  $C_\diamond(\R) \subseteq L^2(\R)$ is dense, and $\omega \mapsto \omega^{-1}$ is unbounded, which shows  the unboundedness of  integration operator $\Io_\diamond$.
\end{proof}

Below we  extend  $\Io_\diamond $  to a closed  operator on $L^2(\R)$ with dense but non-closed domain.  For that purpose we  recall some basic facts on the adjoint and closure of unbounded operators that we will use for our purpose.

\begin{remark}[Adjoint and closure of unbounded operators]\label{rem:adjoint}
For a densely defined potentially unbounded operator $\Ko \colon \dom (\Ko) \subseteq \X \to \Y$ one defines the adjoint domain $\dom (\Ko^*)$ as the set of all  $\data \in \Y$  such that $\inner{\Ko (\cdot) , \data}  \colon \signal  \mapsto \inner{\Ko \signal , \data}$ is bounded.   According    to the Riesz representation theorem  for any $\data \in  \dom (\Ko^*) $ there exists  a unique element $\Ko^*  \data \in \X$ such  $ \inner{\Ko (\cdot) , \data}  = \inner{ \cdot , \Ko^*  \data}$, which defines the  adjoint $\Ko^* \colon \dom (\Ko^*) \subseteq \Y \to \X$. A linear operator   is called closable if it has an extension to a closed linear operator. It is known that $\Ko$ is closable if and only if $\dom(\Ko^*) $ is dense,  in which case $ \Ko^{**}$ is the  closure of $\Ko$.
\end{remark}

Following  Remark~\ref{rem:adjoint}, we next extend $\Io_\diamond$  to a closed operator $\Io \colon \dom(\Io) \subseteq L^2(\R) \to L^2(\R)$   and further summarize some  basic properties  that will be of later use. We call this extension the  integration operator on $L^2(\R)$.

\begin{proposition}[Integration operator on $L^2(\R)$]\label{prop:I} \mbox{}
\begin{enumerate}[label=(\alph*), topsep=0em]
\item\label{prop:I1}
$\Io_\diamond$  has a closed extension $\Io \colon \dom(\Io) \subseteq L^2(\R) \to L^2(\R)$.

\item\label{prop:I2}
 $\dom(\Io) = \dom(\Io^*) = \{f \in L^2(\R) \mid   (i\om)^{-1} \Fo f(\om) \in L^2(\R) \}$.

\item\label{prop:I3}
$ \forall f \in \dom(\Io) \colon \Fo \Io f (\om) = (i\om)^{-1} \Fo f(\om)$.
\item\label{prop:I4}
$\Io^* = - \Io$.
\item\label{prop:I5}
 $\Io$ is  injective with dense range $\ran(\Io) = \{g \in L^2(\R) \mid   (i\om) \Fo g(\om) \in L^2(\R) \}$.

\item\label{prop:I6}
$ \forall g \in \ran(\Io) \colon   \Fo \Io^{-1} g(\om) = (i\om) \Fo g(\om)$.
\end{enumerate}
\end{proposition}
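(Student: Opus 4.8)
The plan is to establish the six claims in sequence, using the description of the adjoint and closure from Remark~\ref{rem:adjoint} together with the Fourier characterization of $\Io_\diamond$ from Lemma~\ref{lem:I}. First I would prove closability by identifying a large subspace inside $\dom(\Io_\diamond^*)$. Since $\Io_\diamond^* = -\Io_\diamond$ on $C_\diamond(\R)$ (an integration-by-parts computation, using that both $f$ and its primitive have compact support / vanish at $\pm\infty$), we get $C_\diamond(\R) \subseteq \dom(\Io_\diamond^*)$, and since $C_\diamond(\R)$ is dense in $L^2(\R)$ (shown in the proof of Lemma~\ref{lem:I}), $\dom(\Io_\diamond^*)$ is dense, so $\Io_\diamond$ is closable with closure $\Io \coloneqq \Io_\diamond^{**}$. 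This gives \ref{prop:I1}.

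Next I would compute $\dom(\Io^*) = \dom(\Io_\diamond^*)$ explicitly, which simultaneously gives \ref{prop:I2} and \ref{prop:I3}. For $g \in \dom(\Io_\diamond^*)$, boundedness of $f \mapsto \inner{\Io_\diamond f, g}$ on $C_\diamond(\R)$ combined with Plancherel and the identity $\Fo\Io_\diamond f(\om) = (i\om)^{-1}\Fo f(\om)$ forces $\om \mapsto (i\om)^{-1}\Fo g(\om)$ to define a bounded linear functional against $\Fo f$ for all $f \in C_\diamond(\R)$; since $\{\Fo f : f \in C_\diamond(\R)\}$ is dense in $L^2$, this is equivalent to $(i\om)^{-1}\Fo g(\om) \in L^2(\R)$, and then $\Io_\diamond^* g = \Fo^{-1}(-(i\om)^{-1}\Fo g)$ by matching inner products. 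Thus $\dom(\Io_\diamond^*) = \{g : (i\om)^{-1}\Fo g \in L^2\}$ with $\Io_\diamond^* = -\Io_\diamond^{\mathrm{F}}$ where $\Io_\diamond^{\mathrm{F}}$ denotes the Fourier multiplier by $(i\om)^{-1}$. Applying the same reasoning once more to $\Io_\diamond^*$ (which is itself densely defined, as its domain is the same dense set) identifies $\Io = \Io_\diamond^{**}$ as the Fourier multiplier by $(i\om)^{-1}$ on exactly that domain, giving \ref{prop:I2}, \ref{prop:I3}, and \ref{prop:I4} at once (the sign in $\Io^* = -\Io$ comes from the factor $-1$ and the fact that $(i\om)^{-1}$ is an odd, purely imaginary symbol, so conjugating it flips the sign).

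For \ref{prop:I5} and \ref{prop:I6}: injectivity is immediate since $(i\om)^{-1} \neq 0$ a.e., so $\Io f = 0 \Rightarrow \Fo f = 0 \Rightarrow f = 0$. The range is $\{h = \Io f : f \in \dom(\Io)\}$; writing $\Fo h = (i\om)^{-1}\Fo f$, we have $h \in \ran(\Io)$ iff $\Fo f = (i\om)\Fo h \in L^2(\R)$, i.e. $\ran(\Io) = \{g : (i\om)\Fo g \in L^2\}$, and then $\Io^{-1}$ is the Fourier multiplier by $(i\om)$, which is \ref{prop:I6}. Density of the range follows because it contains, e.g., all $g$ with $\Fo g$ compactly supported and smooth, which is dense in $L^2(\R)$.

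The main obstacle is the double-adjoint bookkeeping in \ref{prop:I2}--\ref{prop:I4}: one must be careful that the two successive adjoint computations genuinely produce the same domain and that the Fourier-multiplier description obtained via boundedness of functionals is an exact characterization (not just an inclusion), which hinges on density of $\Fo(C_\diamond(\R))$ in $L^2(\R)$ — this in turn follows from density of $C_\diamond(\R)$ in $L^2(\R)$ (already established) and unitarity of $\Fo$. Everything else is routine Plancherel and integration-by-parts manipulation.
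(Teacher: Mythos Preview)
Your proposal is correct and follows essentially the same route as the paper: both arguments use Plancherel together with the Fourier representation $\Fo\Io_\diamond f(\om) = (i\om)^{-1}\Fo f(\om)$ from Lemma~\ref{lem:I} to identify $\dom(\Io_\diamond^*)$ as the natural domain of the Fourier multiplier by $(i\om)^{-1}$, deduce closability from its density, and then read off \ref{prop:I2}--\ref{prop:I6}. The only cosmetic difference is that you first obtain closability via the integration-by-parts inclusion $C_\diamond(\R)\subseteq\dom(\Io_\diamond^*)$ before computing the full adjoint domain, whereas the paper computes $\dom(\Io_\diamond^*)$ directly and gets density as a byproduct; your explicit flagging of the density of $\Fo(C_\diamond(\R))$ for the ``only if'' direction is a point the paper leaves implicit.
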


\begin{proof}
By Plancherel's theorem and Lemma~\ref{lem:I} we have $\inner{\Io_\diamond f, g} = (2\pi)^{-1} \int_\R (\Fo \signal)   (i\om)^{-1} \overline{\Fo g}$. Hence $\inner{\Io_\diamond (\cdot), g} $  is bounded iff  $(i\om)^{-1}  \Fo g \in L^2(\R^2)$ and $\dom(\Io^*) = \{\data \in L^2(\R) \mid   (i\om)^{-1} \Fo \data(\om) \in L^2(\R) \}$.  In particular $\dom(\Io^*)$ is dense which gives  \ref{prop:I1}.  Similar arguments show $\dom(\Io) = \dom(\Io^*)$ and \ref{prop:I2}-\ref{prop:I4}.  Items \ref{prop:I5}, \ref{prop:I6} are immediate consequences of  \ref{prop:I2}, \ref{prop:I3}.
\end{proof}

Our aim is the stable inversion of the integration operator $\Io$, which is equivalent to the stable evaluation of differentiation. For that purpose we  use the concept of filtered TI-DFDs  introduced in this paper. In particular we use the TI-wavelet transform.

\subsection{TI-DFDs  for the integration operator}
\label{ssec:TIWVD}

If $(u_\la, \Vo^*_\la, \qsv_\la)_{\la \in \La} $ is  a TI-DFD for $\Io$, then the translation invariance of $\Io$ implies the  translation invariance of $\Vo^*_\la$. We will therefore consider in the following the case where $\Vo^*_\la \data = v_\la^* \ast \data$ for a TI-frame $(v_\la)_{\la \in \La}$ of $L^2(\R)$.  We will also refer to $(u_\la, v_\la, \qsv_\la)_{\la \in \La} $ as TI-DFD.

Before constructing a wavelet based TI-DFD we start by necessary conditions to be satisfied by the TI-DFDs.

\begin{proposition}[Necessary condition for TI-DFD]\label{prop:DFDi}
Let  $(u_\la, v_\la, \qsv_\la)_{\la \in \La} $ is  TI-DFD for $\Io$. Then
 $u_\la$  are  weakly differentiable with weak derivative $\partial_x u_\la =  - v_\la / \qsv_\la$ and
\begin{linenomath*}
\begin{equation}\label{eq:vi}
	\forall \la \in \La \colon \quad \hat v_\la(\om)  = \qsv_\la \cdot (-i\om) \cdot \hat u_\la(\om) \,.
\end{equation}
\end{linenomath*}
\end{proposition}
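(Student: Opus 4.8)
The plan is to work entirely on the Fourier side, where the TI-DFD properties \ref{ti1}--\ref{ti3} and Proposition~\ref{prop:I} turn into pointwise identities of $L^2$- and $L^\infty$-functions. First I would take an arbitrary $f \in \dom(\Io)$ and apply \ref{ti3} with $\Vo^*_\la g = v_\la^* \ast g$: this gives $v_\la^* \ast (\Io f) = \qsv_\la \cdot (u_\la^* \ast f)$. Passing to Fourier transforms and using that convolution becomes multiplication (with the conjugate appearing because of the $u^\ast$ convention), this reads $\overline{\hat v_\la(\om)} \cdot \widehat{\Io f}(\om) = \qsv_\la \cdot \overline{\hat u_\la(\om)} \cdot \hat f(\om)$ for a.e.\ $\om$. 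Now I substitute the formula $\widehat{\Io f}(\om) = (i\om)^{-1}\hat f(\om)$ from Proposition~\ref{prop:I}\ref{prop:I3}, obtaining $\overline{\hat v_\la(\om)}\,(i\om)^{-1}\hat f(\om) = \qsv_\la\,\overline{\hat u_\la(\om)}\,\hat f(\om)$ for all $f \in \dom(\Io)$.

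The next step is to cancel $\hat f$. Since $\dom(\Io)$ is dense in $L^2(\R)$ (Proposition~\ref{prop:I}) and contains functions whose Fourier transforms are supported on, and nonvanishing throughout, arbitrarily large sets, the set $\{\hat f : f \in \dom(\Io)\}$ is rich enough that the identity forces $\overline{\hat v_\la(\om)}\,(i\om)^{-1} = \qsv_\la\,\overline{\hat u_\la(\om)}$ for a.e.\ $\om$; concretely one can test against $f$ with $\hat f = \chi_{[a,b]}$ for any interval bounded away from $0$ (such $f$ lies in $\dom(\Io)$ since $(i\om)^{-1}\hat f \in L^2$). Taking complex conjugates and multiplying by $i\om$ yields $\hat v_\la(\om) = \qsv_\la\,\overline{(i\om)^{-1}}^{\,-1}\,\hat u_\la(\om)$; since $\overline{(i\om)^{-1}} = -(i\om)^{-1}$ wait --- more carefully, conjugating $\overline{\hat v_\la}(i\om)^{-1} = \qsv_\la \overline{\hat u_\la}$ gives $\hat v_\la \cdot \overline{(i\om)^{-1}} = \qsv_\la \hat u_\la$, i.e.\ $\hat v_\la \cdot (-(i\om)^{-1}) = \qsv_\la\hat u_\la$ using $\overline{(i\om)^{-1}} = 1/\overline{i\om} = 1/(-i\om) = -(i\om)^{-1}$, hence $\hat v_\la(\om) = \qsv_\la\cdot(-i\om)\cdot\hat u_\la(\om)$, which is exactly \eqref{eq:vi}.

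For the weak differentiability claim, I would read off that $\widehat{\partial_x u_\la}(\om) = i\om\,\hat u_\la(\om)$ should hold, and check from \eqref{eq:vi} that $i\om\,\hat u_\la(\om) = -\hat v_\la(\om)/\qsv_\la$, which lies in $L^2(\R)$ because $v_\la \in L^2(\R)$. Thus $\om \mapsto i\om\,\hat u_\la(\om) \in L^2(\R)$, which is precisely the Fourier characterization of $u_\la \in H^1(\R)$, and then $\partial_x u_\la = \Fo^{-1}(i\om\,\hat u_\la) = -\Fo^{-1}(\hat v_\la)/\qsv_\la = -v_\la/\qsv_\la$.

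The main obstacle is the cancellation-of-$\hat f$ step: one must justify going from an identity that holds after multiplying by $\hat f$ for all admissible $f$ to a pointwise (a.e.) identity of the multipliers. The clean way is to note that for any bounded interval $J$ with $\overline{J}\not\ni 0$ the indicator $\chi_J$ is the Fourier transform of some $f\in\dom(\Io)$ (integrability of $(i\om)^{-1}\chi_J$ against $L^2$ is clear), so the identity holds a.e.\ on $J$; letting $J$ exhaust $\R\setminus\{0\}$ gives the claim a.e. One should also remark that $\hat u_\la, \hat v_\la \in L^\infty$ by \ref{ti1} applied to both TI-frames, so all products above are well-defined as $L^2$-functions and no distributional subtleties arise.
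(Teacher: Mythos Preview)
Your proposal is correct and follows essentially the same route as the paper: apply \ref{ti3} to $\Io$, pass to the Fourier side using Proposition~\ref{prop:I}\ref{prop:I3} to obtain $(i\om)^{-1}\overline{\hat v_\la}\,\hat f = \qsv_\la\,\overline{\hat u_\la}\,\hat f$ for all $f\in\dom(\Io)$, cancel $\hat f$ by density, and then read off weak differentiability from $(i\om)\hat u_\la = -\hat v_\la/\qsv_\la \in L^2(\R)$. Your explicit justification of the cancellation step via indicator test functions is in fact more careful than the paper, which dispatches it with the single phrase ``since $\dom(\Io)$ is dense.''
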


\begin{proof}
According to  \ref{ti3} we have  $v_\la^* \ast (\Io f) = \qsv_\la \cdot (v_\la^* \ast f)$ for all $f \in \dom(\Io)$. With  the  Fourier convolution theorem and the Fourier representation of $\Io$ we get   $(i\om)^{-1} \cdot ( \overline{\Fo v_\la} )\cdot   \Fo f = \qsv_\la \cdot (\overline{\Fo u_\la} ) \cdot \Fo f$. Since $\dom(\Io)$  is dense this gives \eqref{eq:vi}. According to \eqref{eq:vi},  $(i\om) \cdot \hat u_\la$ are  in $L^2(\R^2)$ and  $u_\la$ has weak  derivative  $\partial_x u_\la =  - v_\la / \qsv_\la$.
\end{proof}

Proposition~\eqref{prop:DFDi} states that a necessary condition  for $(u_\la)_{\la\in \La}$   to be part of  TI-DFD  is that   all $u_\la$ are weakly differentiable and that  there  exist constants $\qsv_\la>0$ such that  $(-\qsv_\la \partial_x u_\la)_{\la\in \La}$  again is a TI-frame.  According to  Proposition \ref{prop:ti} this means that there are constants $ A_\diamond, B_\diamond$ such that  $2\pi A_\diamond \leq  \sum_{\la\in \La}  \abs{\qsv_\la \om \hat u_\la(\om)}^2 \leq 2\pi B_\diamond$.    In the following theorem we provide a class of  examples using TI-wavelets $(u_j)_{j \in \Z }$ of the form
\begin{linenomath*}
\begin{align} \label{eq:ti-wavelet}
\forall j \in \Z \colon \quad 	u_j(x)  &=  2^j u(2^j x), \\ \label{eq:ti-vag}
\forall j \in \Z \colon \quad 	v_j(x)  &=   - 2^{-j} \partial_x u_j(x) = - 2^{j} (\partial_x  u) (2^j x) \,.
 \end{align}
\end{linenomath*}
In this case we call the elements $v_j$ TI-vaguelettes and the corresponding TI-DFD $(u_j, v_j, 2^{-j})_{j \in \Z} $ a translation invariant wavelet-vaguelette decomposition (TI-WVD).  Note that system  \eqref{eq:ti-vag} is not automatically a TI frame even if $(u_j)_{j\in \Z}$ is. Roughly spoken it  requires  $\hat u_\la$ to be sufficiently well localized  such that  $\qsv_\la \cdot (-i\om) \hat u_\la $  behaved  in some sense similar to  $\hat u_\la$. A more precise condition will be deduced from the proof of the following theorem.

\begin{theorem}[TI-WVD for $\Io$]\label{thm:DFDii}
Let $(u_j)_{j \in \Z}$ be a TI-wavelet frame of the form \eqref{eq:ti-wavelet} such that $\supp (\widehat u) \subseteq \{\omega \in \R \mid a \leq \abs{\omega} \leq b \}$ for some $a, b>0$. Then with $(v_j)_{j \in \Z}$ as in \eqref{eq:ti-vag},  the family $(u_j, v_j, 2^{-j})_{j \in \Z} $ is a TI-DFD for $\Io$, named  TI-WVD for the integration operator.
 \end{theorem}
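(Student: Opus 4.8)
The plan is to verify the three defining properties \ref{ti1}, \ref{ti2}, \ref{ti3} of a TI-DFD (Definition~\ref{def:ti-dfd}) for the system $(u_j, v_j, 2^{-j})_{j \in \Z}$, working throughout on the Fourier side via the formula $\Fo\Io f(\om) = (i\om)^{-1}\Fo f(\om)$ from Proposition~\ref{prop:I}\ref{prop:I3} and the characterization of TI-frames in Proposition~\ref{prop:ti}\ref{prop-ti1}. Property \ref{ti1} is immediate: $(u_j)_{j\in\Z}$ is assumed to be a TI-wavelet frame, so $\hat u_j \in L^\infty(\R)$ and the frame inequalities hold by hypothesis.

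For \ref{ti3} I would compute, for $f \in \dom(\Io)$, the Fourier transform of $v_j^* \ast (\Io f)$. Since $\hat v_j(\om) = \overline{\hat v_j^*(\om)}$-type bookkeeping applies, note first that from \eqref{eq:ti-vag} one has $\hat v_j(\om) = -(i\om)2^{-j}\,\widehat{u(2^j\,\cdot\,)}(\om)$; more usefully, writing $u_j(x) = 2^j u(2^j x)$ we get $\hat u_j(\om) = \hat u(2^{-j}\om)$ and $\hat v_j(\om)$ differs from $(i\om)\cdot 2^{-j}\hat u_j(\om)$ only by the scaling, so that the key identity $\hat v_j(\om) = 2^{-j}\cdot(-i\om)\cdot\hat u_j(\om)\cdot(\text{scaling factor})$ should be matched against \eqref{eq:vi} with $\qsv_j = 2^{-j}$. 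Concretely, $\Fo(v_j^* \ast \Io f)(\om) = \overline{\hat v_j(\om)}\cdot(i\om)^{-1}\hat f(\om)$, and substituting the relation between $\hat v_j$ and $\hat u_j$ the factor $(i\om)^{-1}$ cancels the $(-i\om)$, leaving $2^{-j}\overline{\hat u_j(\om)}\hat f(\om) = 2^{-j}\Fo(u_j^* \ast f)(\om)$, which is exactly \ref{ti3} with quasi-singular value $\qsv_j = 2^{-j}$. The support condition $\supp(\hat u) \subseteq \{a \le |\om| \le b\}$ is what makes all these manipulations legitimate: it keeps $(i\om)^{-1}$ bounded on $\supp \hat u_j$, so $v_j \in L^2(\R)$ with $\hat v_j \in L^\infty$, and it is also the mechanism that will make $\Vo_j^* \data = v_j^* \ast \data$ a bounded operator on $\Y = \overline{\ran \Io}$.

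The real work is \ref{ti2}: showing $\sum_{j\in\Z}\norm{v_j^* \ast \data}^2 \asymp \norm{\data}^2$ for $\data \in \overline{\ran\Io}$. By Plancherel this amounts to $\sum_j |\hat v_j(\om)|^2 \asymp 2\pi$ a.e.\ on the set where $\Fo\data$ is supported, i.e.\ (by Proposition~\ref{prop:I}\ref{prop:I5}) a.e.\ on $\R$. Using $\hat v_j(\om) = \qsv_j(-i\om)\hat u_j(\om) = 2^{-j}(-i\om)\hat u(2^{-j}\om)$ we get $|\hat v_j(\om)|^2 = |2^{-j}\om|^2|\hat u(2^{-j}\om)|^2 = |\zeta|^2|\hat u(\zeta)|^2$ with $\zeta = 2^{-j}\om$; so $\sum_j |\hat v_j(\om)|^2 = \sum_j \Phi(2^{-j}\om)$ where $\Phi(\zeta) := |\zeta|^2|\hat u(\zeta)|^2$. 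The hypothesis that $(u_j)$ is a TI-frame gives $2\pi A \le \sum_j |\hat u(2^{-j}\om)|^2 \le 2\pi B$; I need the analogous two-sided bound for $\sum_j \Phi(2^{-j}\om)$. The compact annular support $a \le |\zeta| \le b$ is the crucial input: on that annulus $a^2 \le |\zeta|^2 \le b^2$, hence $a^2|\hat u(\zeta)|^2 \le \Phi(\zeta) \le b^2|\hat u(\zeta)|^2$ pointwise, and since every term in both sums is supported in that annulus (after dyadic rescaling), summing gives $2\pi A a^2 \le \sum_j\Phi(2^{-j}\om) \le 2\pi B b^2$. This establishes \ref{ti2} with new constants, and in the same breath shows $(v_j)_{j\in\Z}$ is itself a TI-frame and that $\Vo_j^* = v_j^* \ast (\cdot)$ is bounded from $\Y$ into $L^2(\R)$. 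The anticipated main obstacle is purely bookkeeping of the dyadic scaling and the $\om$ versus $2^{-j}\om$ substitution together with keeping track of which set $\Fo\data$ lives on (all of $\R$, by density of $\ran\Io$), rather than any deep difficulty — the compact annular support hypothesis has been engineered precisely to trivialize the boundedness and the two-sided frame estimate for the vaguelettes.
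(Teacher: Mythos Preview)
Your proposal is correct and follows essentially the same route as the paper: both reduce the claim to showing that $(v_j)_{j\in\Z}$ is a TI-frame by bounding $\sum_j |2^{-j}\om\,\hat u_j(\om)|^2$ between $2\pi a^2 A$ and $2\pi b^2 B$ via the annular support of $\hat u$, exactly as you do with the substitution $\zeta = 2^{-j}\om$. The only cosmetic difference is that the paper dispatches \ref{ti3} by citing Proposition~\ref{prop:DFDi} (implicitly using the converse of that necessary-condition computation), whereas you verify it directly on the Fourier side; your version is in fact slightly more self-contained, and your explicit remark that the support condition forces $\hat v_j\in L^\infty(\R)$ fills in a detail the paper leaves implicit.
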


\begin{proof}
Property \ref{ti1} is satisfied by assumption. We now show that $(v_j)_{j\in\Z}$ as given in \eqref{eq:ti-vag} is indeed a TI-frame, which according to $\mathcal{V}_j^* \data \coloneqq v_j^* \ast \data$ yields \ref{ti2}.  Equivalently, we have to show $ 2\pi \asymp \sum_{\la\in \La} \abs{(2^{-j}\om ) \cdot  \hat u_j(\om)}^2$. Let  $A,B>0$ be the frame bounds of the $(u_j)_{j \in \Z}$.
Since $\supp (\hat u) \subseteq \{\omega \in \R \mid a \leq \abs{\omega} \leq b\}$, the scaled versions $\hat u_j(\om)$ have support in $\{\omega \in \R \mid a 2^j  \leq \abs{\omega} \leq b 2^j\}$. Therefore, we have $ 2^j a \abs{\hat u_j(\om)}  \leq \abs{\om \hat u_j(\om)}  \leq 2^j b \abs{ \hat u_j(\om)}$. Together  with the frame property  of $(u_j)_{j \in \Z}$ this gives $ 2\pi a^2 A \leq  \sum_{\la\in \La} 2^{-2j}   \abs{\om \Fo u_j(\om)}^2 \leq 2\pi b^2 B$ and concludes the proof. Clearly we have $\Ko^* v_j = \kappa_j u_j$  which gives \ref{ti3}.
\end{proof}

Note that we made the restriction in Theorem \ref{thm:DFDii} to compactly supported wavelets in order to avoid technical difficulties and to focus on the main ideas. Similar results can be derived under weaker assumptions  guaranteeing that $ 2\pi \asymp \sum_{\la\in \La} \abs{(2^{-j} \om) \cdot  \hat u(2^{-j} \om)}^2$. Further note that such results can be extended to derivatives and related operators in higher dimension wich for example, is important for edge detection of images \cite{goeppel2022feature}. Such a derivation is however beyond the scope of this paper.

\subsection{Numerical  realization}
\label{ssec:implementation}

For the numerical simulations presented below we use the Tikhonov-type filter (see Example \ref{ex:tik}). The corresponding  Tikhonov filtered TI-WVD reads
\begin{linenomath*}
\begin{align} \label{eq:tiwvd-diff}
	\reg^{(2)}_\al( \data ) \coloneqq \sum_{j \in \Z}
	w_j \ast  \Bigl(  \frac{2^{-j} }{2^{-2j} + \al} \cdot  (v_j^* \ast \data ) \Bigr)
\end{align}
\end{linenomath*}
The  instability of differentiation is  reflected in the decreasing   quasi-singular values $\qsv_j = 2^{-j}$ with increasing $j$. The filtered TI-WVD stabilizes  the inversion by   replacing the   inverse quasi-singular coefficients $1/2^{-j}$ by the Tikhonov filtered approximations $2^{-j}  / ( 2^{-2j} +\al )$.

The TI-vaguelette coefficients can be computed  by $ v_j^* \ast g  = - 2^{-j} (\partial_x u_j \ast g)  = - 2^{-j} (u_j \ast  \partial_x \data ) $.
The latter form has the  advantage that any existing implementation  for  computing the TI-wavelet coefficients  $u_j \ast \data$ can  be used  for its  evaluation. Such a strategy will be employed in this paper. In summary, the  Tikhonov filtered  TI-WVD reconstruction \eqref{eq:tiwvd-diff} can be implemented by  following  Algorithm.

\begin{algorithm}[Tikhonov-filtered TI-WVD reconstruction]\label{alg:tiwvd}\mbox{}\\
Input: Noisy data  $\data^\delta$.\\
Parameters: Mother wavelets $u$, $w$ and  regularization parameter $\al>0$.\\
Output: Regularized reconstruction  $\signal_\alpha^\delta$.
\begin{enumerate}[label=(A\arabic*), topsep=0em,itemsep=0em]
\item\label{A1} Compute the auxiliary TI-wavelet coefficients $(\Uo_j^* \data^\delta)_{j \in \Z} = (u_j^* \ast \data^\delta)_{j\in\Z} $.
\item\label{A2} Obtain the TI-vaguelette coefficients by $(d_j)_{j \in \Z}  = (- 2^{-j}  \partial_x  \Uo_j^* \data^\delta)_{j \in \Z}$.
\item\label{A3}  Multiplication with Tikhonov-filter coefficients: $(c_j)_{j \in \Z} \coloneqq (2^{-j}  / ( 2^{-2j} + \al^2 ) \cdot d_j)_{j \in \Z}$.
\item\label{alg:syntehsis} Application of TI-wavelet synthesis  $\signal_\al^\delta \coloneqq \Wo((c_j)_{j\in \Z})$.
\end{enumerate}
\end{algorithm}

For the numerical simulation, functions $\signal$, $\data$ and  corresponding  TI-coefficients are given on the interval $[-1,1]$ and discretized   using $N=512$ equidistant sample points. The TI-wavelet operators $\Uo^*$, $\Wo$ are numerically computed  with the PyWavelet package version~{1.1.1} \cite{Lee2019}  in Python~{3.8.8}. The standard decimated Tikhonov-filtered   WVD takes a form similar to \eqref{eq:tiwvd-diff} and reads 
\begin{linenomath*}
\begin{equation}  \label{eq:wvd-diff}
	\wvd_\al( \data ) \coloneqq \sum_{\la \in \La}
\frac{2^{-j} }{2^{-2j} + \al} \cdot   \inner{v_\la,  \data} w_\la \,.
\end{equation}
\end{linenomath*}
Compared to TI WVD the inner and outer convolution are replaced by the downsampled and upsampled convolution, respectively (see Remark~\ref{rem:wvd-conv}) and Algorithm~\ref{alg:tiwvd} is adjusted accordingly.  In particular the realization of \ref{A1} for the TI case  discretizes $u_j^* \ast \data^\delta$ using the maximal number of $N$ samples, whereas in the decimated uses fewer samples for coarser scales.

{\rot The TI-WVD and the decimated WVD  have a similar computational expense, taking only a few milliseconds in practice. Theoretical analysis also reveals a similar  complexity. Both methods require the decimated (or undecimated) wavelet transform and its inverse. Assuming the use of a compactly supported mother wavelet and computing wavelet coefficients for possible all scales, the wavelet transform requires $\mathcal{O}(N)$ FLOPS, while the TI  version requires $\mathcal{O}(N\log(N))$ FLOPS. Hence they only differ by a logarithmic factor.}

\subsection{Comparison methods}

There is wide range  of regularization methods for stable numerical differentiation proposed in the literature \cite{hanke2001inverse,anderssen1974numerical,rice1983smoothing,pereverzev2013recommendedlegendre}. While a detailed discussion on available methods is beyond the scope of this paper, we will compare it to related non TI methods, namely the decimated WVD noted above and  a standard projection method using Legendre polynomials  $
	P_k (x) = (k + 1/2)^{1/2} ( 2^k k !)^{-1} \partial_x^k (x^2 - 1)^k$ taking the form
\begin{linenomath*}
\begin{equation}\label{eq:legendre-diff}
	\legendre_N ( \data ) \coloneqq
	\sum_{k = 1}^N \inner{\data^\delta, P_k} \partial_x P_k \,.
\end{equation}
\end{linenomath*}
In \eqref{eq:legendre-diff} the truncation index $N$ plays the role of the regularization parameter. It is well known $(P_k)_{k\in\N_0}$ forms an ONB of $L^2([-1, 1])$ with respect to the standard inner product  \cite{arfken1999mathematical} and have been used as a recommended  expansion basis for numerical differentiation for  \cite{pereverzev2013recommendedlegendre}. Note that   \eqref{eq:legendre-diff}  follows the spirit to the filtered DFD using the truncation filter. However it does not fit into the DFD framework in general because $\partial_x P_k$ cannot be rescaled to form a Riesz basis. This is also the reason why we do not use the Tikhonov filter for the Legendre expansion. We conjecture that the frame and Legendre type decompositions can be unified in a more general regularization theory, which we will consider as a future research direction.  The Legendre coefficients and the polynomial expansion where calculated using the Numpy package, version 1.20.3.

{\rot Finally, we would like to note that in all numerical simulations, it is necessary to truncate the series for all used methods. However,  for the wavelet  methods we only truncate the series inline with the discretization, meaning that we do not introduce any additional regularization beyond the natural discretization that is inherent in all the methods presented in this paper. While this is an interesting issue, its analysis is beyond the scope of the present manuscript. Opposed to that, in the Legendre  method  the truncation index is actually the regularization parameter.  
}

\subsection{Numerical examples}

In this subsection we  present reconstruction results for the (Tikhonov) filtered TI-WVD  and  compare  them with  the standard decimated  WVD and the truncated Legendre expansion. 
We will demonstrate the feasibility on the three artificial signals. To simulate real applications, we add Gaussian noise to the data  $\data^\delta = \Io \signal + \eta$ where $\eta$ is Gaussian white noise with  standard deviation $\sigma = 0.05$. As underlying wavelet frame we chose the Daubechies wavelet with five vanishing moments. Note that these wavelets form a tight frame and thus the dual frame is given by the  frame itself. We define the corresponding TI vaguelettes by using equation \eqref{eq:ti-vag}.  This  gives an easy implementation of \ref{alg:syntehsis} where we can exchange $\Wo$ by $\Uo$.  {\rot Although Theorem \ref{thm:DFDii} is not applicable in this case, we still expect the results of that theorem to hold. However, we currently do not have a proof for this.}

\noindent\textbf{Linear filtering:}
We first  present  results comparing  \eqref{eq:wvd-diff},  \eqref{eq:legendre-diff} and unregularized finite differences to our TI DFD-based approach \eqref{eq:tiwvd-diff}.
In all cases, the regularization parameters were chosen via a grid search in order to obtain optimal reconstructions and a fair comparison. The parameters $N\in\N_0, \alpha >0$ were optimized in terms of the relative $\ell_2$ error $\norm{\signal_{\rm rec} - \signal}_2/\norm{\signal}_2$ where $\signal$ is the original signal and $\signal_{\text{rec}}$ the reconstruction.

\begin{figure}[h!] \centering
         \includegraphics[height=0.25\textwidth, width=0.31\columnwidth]{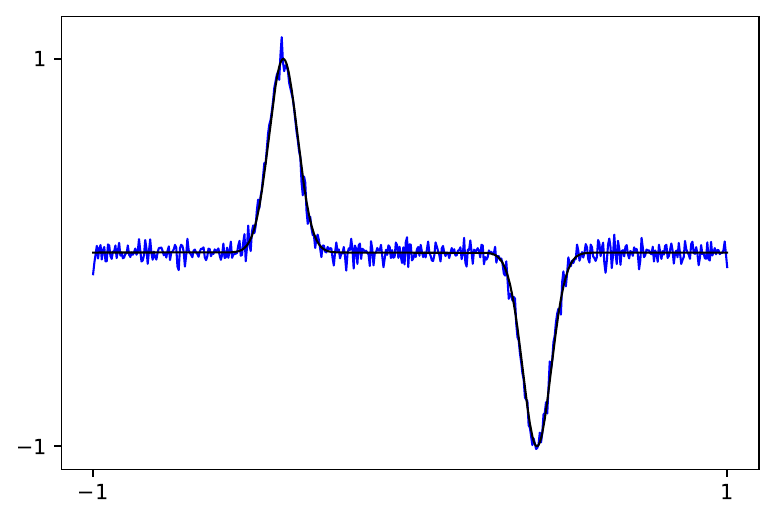}
         \includegraphics[height=0.25\textwidth, width=0.31\columnwidth]{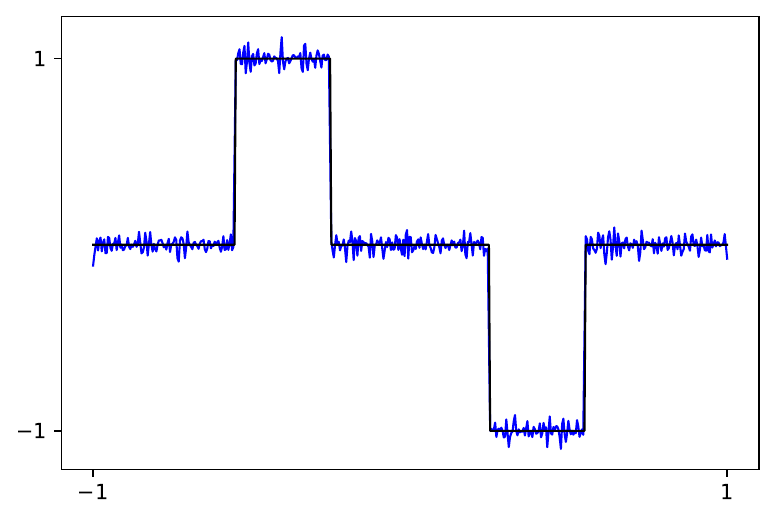}
         \includegraphics[height=0.25\textwidth, width=0.31\columnwidth]{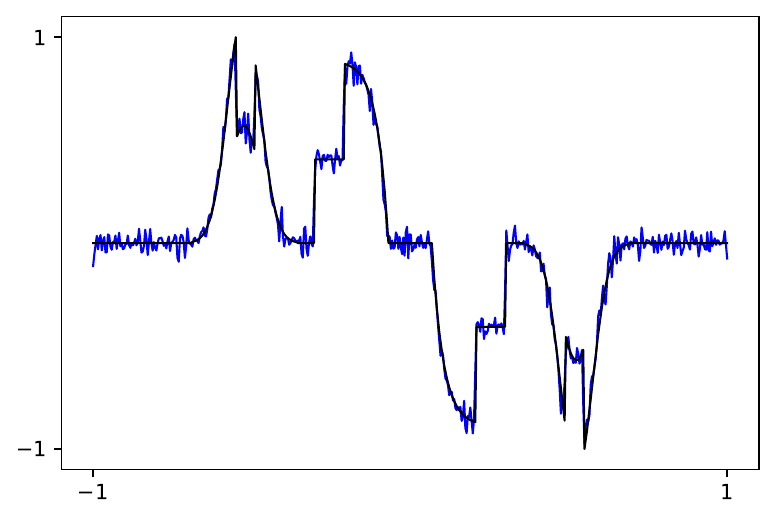}\\
         \includegraphics[height=0.25\textwidth, width=0.31\columnwidth]{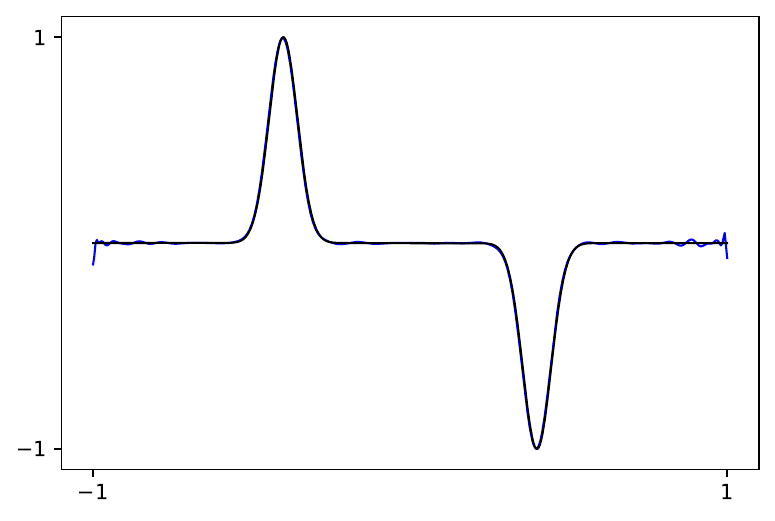}
         \includegraphics[height=0.25\textwidth, width=0.31\columnwidth]{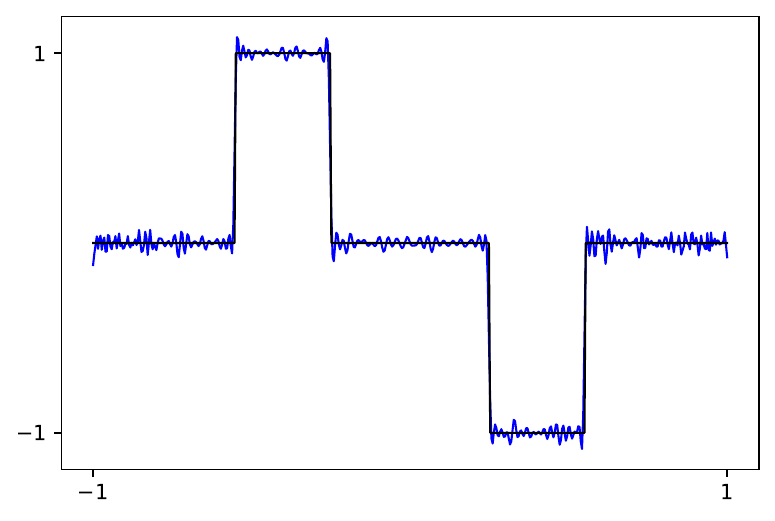}
         \includegraphics[height=0.25\textwidth, width=0.31\columnwidth]{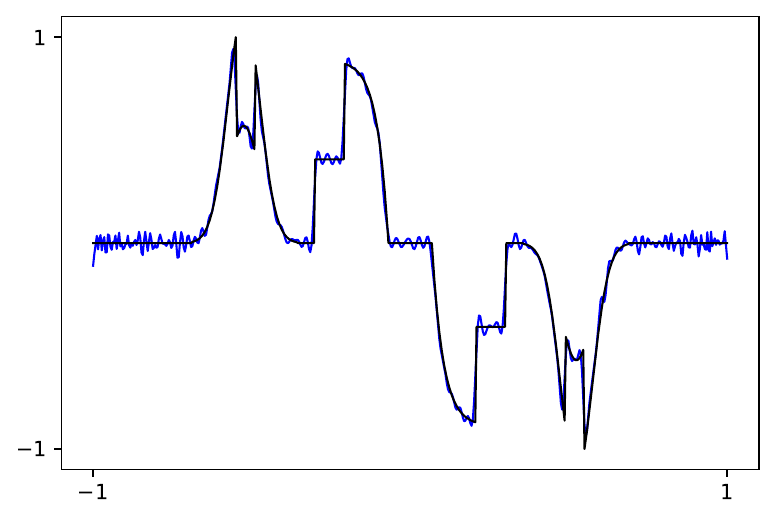}\\
         \includegraphics[height=0.25\textwidth, width=0.31\columnwidth]{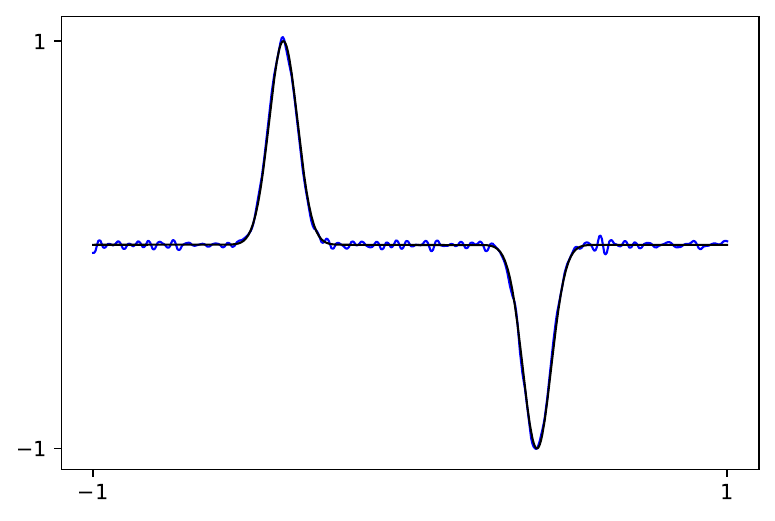}
         \includegraphics[height=0.25\textwidth, width=0.31\columnwidth]{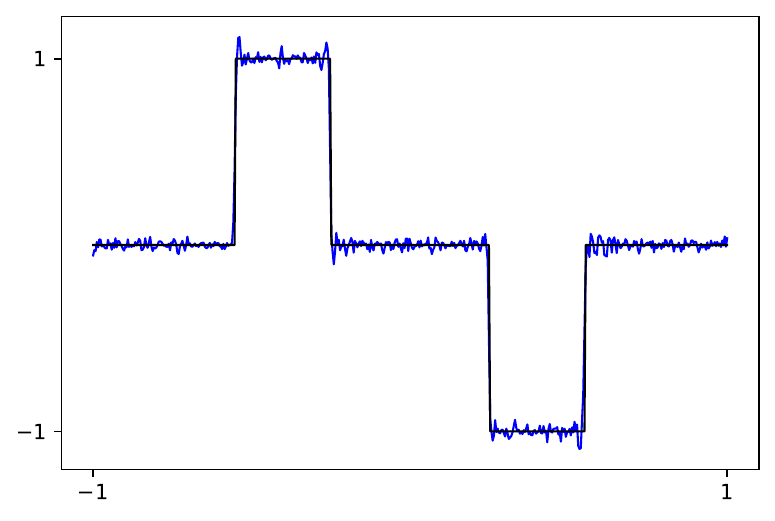}
         \includegraphics[height=0.25\textwidth, width=0.31\columnwidth]{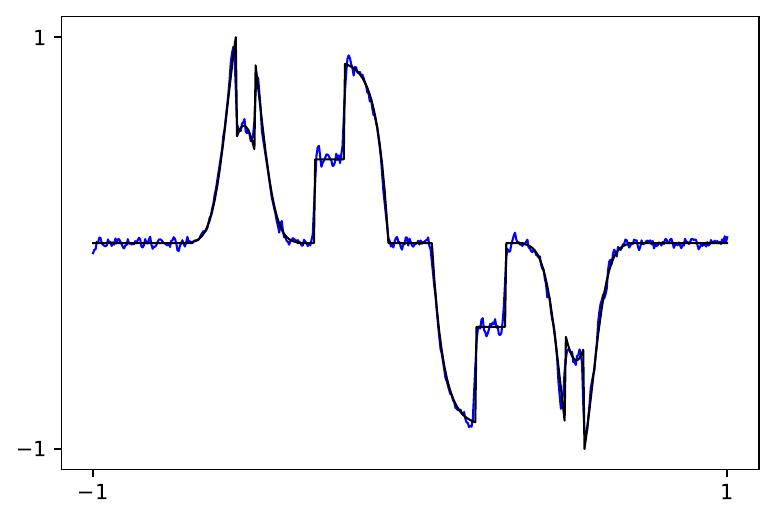}\\
         \includegraphics[height=0.25\textwidth, width=0.31\columnwidth]{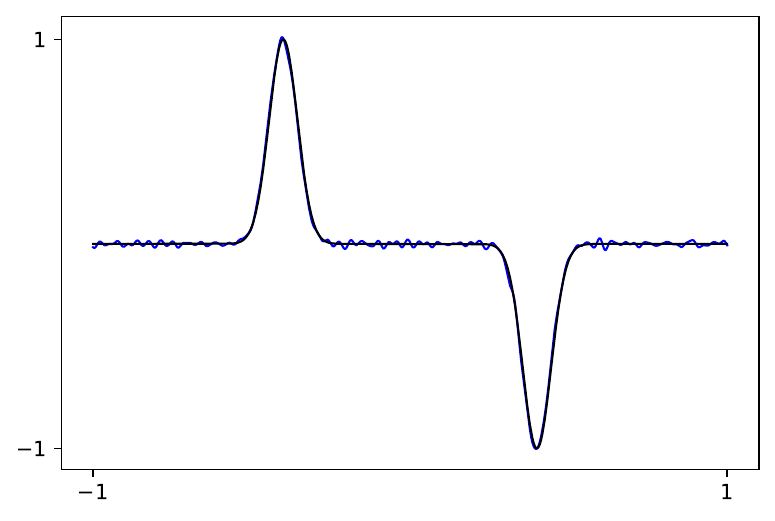}
         \includegraphics[height=0.25\textwidth, width=0.31\columnwidth]{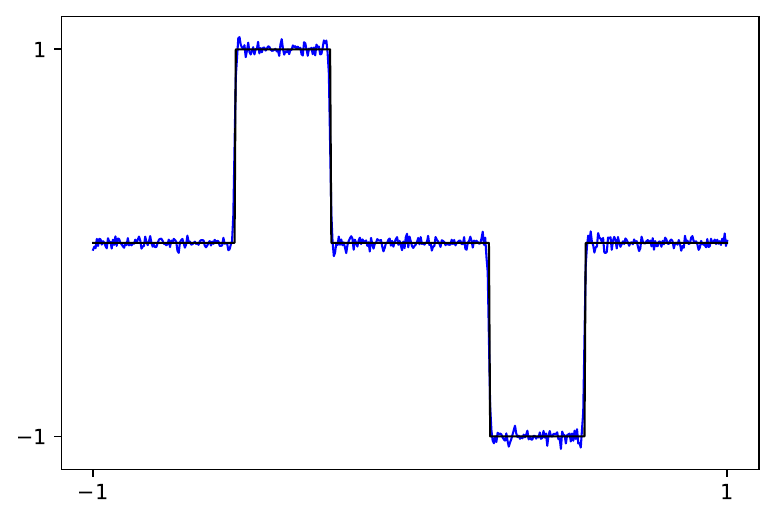}
         \includegraphics[height=0.25\textwidth, width=0.31\columnwidth]{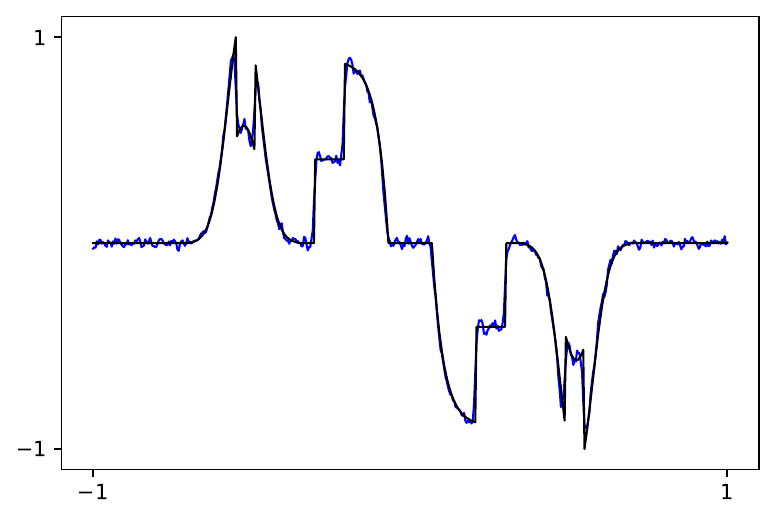}
\caption{Reconstructions from noisy data using finite differences (top row), the Legendre approach (second row 2), WVD (third row) and TI-WVD (bottom row). The original signal is shown in and the corresponding reconstruction in blue.}
\label{fig:noisy}
\end{figure}

The three example functions we will use are given as a smooth signal, a piecewise constant signal and a signal containing sections of both (mixed). Corresponding reconstructions are presented in Figure~\ref{fig:noisy}, where the black graph always shows the original signal and the blue graph represents the obtained reconstruction for any of the reconstruction methods. The first row, shows the non regularized reconstruction obtained by applying finite differences. The second row shows the reconstructions obtained using the truncated Legendre polynomials. The third and fourth row show the filtered WVD and the filtered TI-WVD reconstructions, respectively. We have chosen two decomposition levels

For all example signals, the unregularized reconstruction performs worst. For the smooth  phantom in the first column, the Legendre approach appears to have the best approximation properties. Besides boundary effects, the signal is recovered almost exactly, whereas the wavelet based WVD methods suffer from small wave like artifacts. For the piecewise constant signal and the mixed signal, however, we see that the Legendre approach does not yield a good approximation method anymore. In particular, close to the jumps the approximation quality is poor. In this case  TI-WVD clearly has the best reconstruction quality.  These visual findings are confirmed by quantitative evaluation shown in Table~\ref{tab:errors reg}, which shows the relative $\ell_2$ reconstruction error for any of the reconstructions.

\begin{table}[htb!]
    \begin{center}
    \begin{tabular}{|  l  | l | l | l | }
    \toprule
       & smooth signal & constant signal  & mixed signal \\
       \midrule
   Unregularized & 0.0165 & 0.0103 & 0.0142 \\
       \midrule
   Legendre approach $\legendre_N$ & \textbf{0.0017} & 0.0100 & 0.011 \\
       \midrule
   Decimated WVD $\wvd_\al^{(2)}$ &  0.0029 & 0.0096 & 0.010 \\
       \midrule
   Proposed TI-WVD $\reg_\al^{(2)}$ &  0.0021 & \textbf{0.0089} & \textbf{0.0093} \\
       \bottomrule
    \end{tabular}
    \end{center}
    \caption{Comparison of relative $\ell_2$ reconstruction errors. The best results for each signal are highlighted in boldface.}
    \label{tab:errors reg}
\end{table}

\noindent\textbf{Non-linear filtering:}

As a second example we want to compare the TI-WVD with the standard decimated WVD more closely. For that purpose we  include coefficient thresholding which is  known to optimally remove Gaussian white noise \cite{candes2002,donoho1995nonlinear,haltmeier2014extreme}.
Since we are dealing with multi-scale decompositions we choose level dependent thresholds $t = 2^{-j} \beta$, for some fixed $\beta >0$  and replace the vaguelettes coefficients by $\soft (2^{-j} \beta, v_j^\ast \ast g)$ with soft-thresholding function $\soft(t,x) \coloneqq \sign (x) \max \{0, \abs{x} - t\}$. We have chosen four decomposition levels and again performed a parameter sweep for $\beta >0$ to obtain optimal reconstructions.

Figure~\ref{fig:soft} shows reconstruction from noisy measurements using thresholded TI-WVD and thresholded WVD. The results clearly suggest, that the TI-WVD  yields more reliable reconstruction as the reconstruction is less perturbed by remaining artifacts. Quantitatively, the $\ell_2$ reconstruction error is given by $0.0095$ for the TI-WVD, and $0.0011$ for the decimated WVD. This is inline with reported results for the simple denoising task. {\rot Finally, we would like to note that the TI-WVD reconstruction shows  reduced wavelet artifacts in comparison to its decimated counterpart. Specifically, the oscillating, well-localized errors are significantly reduced in the TI-WVD. }

\begin{figure}[h!]
\centering
\includegraphics[height=0.35\textwidth, width=0.36\columnwidth]{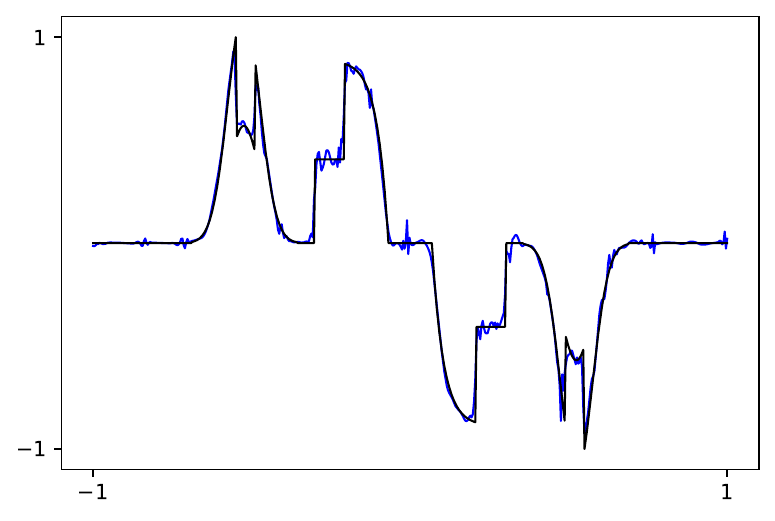}
\includegraphics[height=0.35\textwidth, width=0.36\columnwidth]{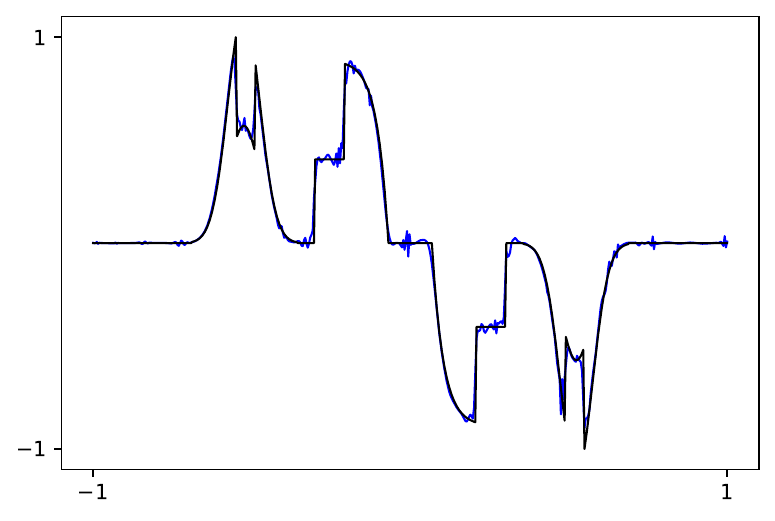}
\caption{\rot Nonlinearly filtered WVD (left) thresholded  TI-WVD (right), where  the filter is  the well established  soft-thresholding function. }
\label{fig:soft}
\end{figure}

\section{Conclusion}
\label{sec:conclusion}

This work presents a new approach for addressing ill-posed inverse problems, called the translation invariant frame decomposition (TI-DFD). We  showed that filtered TI-DFDs yields a regularization method with order optimal rates.  Unlike iterative  and variational methods, the filtered TI-DFD has an explicit form, which enables  efficient implementation.  The translation invariant structure of TI-DFDs has been found to improve reconstruction quality and reduce artifacts in the context of wavelet thresholding.  To demonstrate the effectiveness of TI-DFDs, we constructed a 1D integration example using wavelet frames (translation invariant wavelet vaguelette decomposition; TI-WVD). Our results demonstrate  that filtered TI-WVD outperformed the standard WVD method, with significantly reduced wavelet artifacts. In future research, one promising direction  is exploring the use of nonlinear filters in TI-DFDs or different  parameter selection. Additionally, constructing TI-DFDs for other operators, such as the Radon transform or related transforms using curvelet or shearlet systems ate interesting lines of future research .

\section{Acknowledgement}
The contribution by S.G. is part of a project that has received funding from the European Union’s Horizon 2020 research and innovation programme under the Marie Sk\l{}odowska-Curie grant agreement No 847476. The views and opinions expressed herein do not necessarily reflect those of the European Commission.

\end{document}